\tikzset{
  node distance=2.5cm,
  every state/.style={
    semithick,
    fill=gray!10},
  initial text={},
  double distance=2pt,
  every edge/.style={
    draw,
    ->,>=stealth,
    auto,
    semithick},
  every loop/.style={
    >=Latex, 
    semithick 
  }
}
\definecolor{gray}{RGB}{180,180,180}
\definecolor{red}{RGB}{220,70,70}
\definecolor{yellow}{RGB}{240,220,70}
\definecolor{green}{RGB}{70,180,70}
\definecolor{blue}{RGB}{70,120,220}
\theoremstyle{plain}
\newtheorem{theorem}{Theorem}
\newtheorem{lemma}[theorem]{Lemma}
\newtheorem{proposition}[theorem]{Proposition}
\newtheorem{corollary}[theorem]{Corollary}
\theoremstyle{definition}
\newtheorem{definition}[theorem]{Definition}
\newtheorem{example}[theorem]{Example}
\newtheorem{nonexample}[theorem]{Nonexample}
\newtheorem{conjecture}[theorem]{Conjecture}
\theoremstyle{remark}
\newtheorem{remark}[theorem]{Remark}
\numberwithin{equation}{section}
\newcommand{\F}{\mathbb{F}}
\newcommand{\N}{\mathbb{N}}
\renewcommand{\P}{\mathbb{P}}
\newcommand{\Q}{\mathbb{Q}}
\newcommand{\Z}{\mathbb{Z}}
\newcommand{\calo}{\mathcal{O}}
\newcommand{\calp}{\mathcal{P}}
\newcommand{\mfm}{\mathfrak{m}}
\newcommand{\mfp}{\mathfrak{p}}
\newcommand{\rmn}{\mathrm{N}}
\newcommand{\Per}{\mathrm{Per}}
\newcommand{\Preper}{\mathrm{PrePer}}
\newcommand{\lcm}{\mathrm{lcm}}
\newcommand{\lra}{\longrightarrow}
\newcommand{\paren}[1]{\mathopen{}\left(#1\right)\mathclose{}}
\newcommand{\set}[1]{\mathopen{}\left\{#1\right\}\mathclose{}}
\newcommand{\verts}[1]{\mathopen{}\left\lvert#1\right\rvert\mathclose{}}
\newcommand\restr[2]{{\left.\kern-\nulldelimiterspace#1\right|_{#2}}}
\renewcommand{\setminus}{-}
    \def\@thm#1#2#3{%
      \ifhmode
        \unskip\unskip\par
      \fi
      \normalfont
      \trivlist
      \let\thmheadnl\relax
      \let\thm@swap\@gobble
      \let\thm@indent\indent 
      \thm@headfont{\scshape}
      \thm@notefont{\fontseries\mddefault\upshape}%
      \thm@headpunct{.}
      \thm@headsep 5\p@ plus\p@ minus\p@\relax
      \thm@space@setup
      #1
      \@topsep \thm@preskip               
      \@topsepadd \thm@postskip           
      \def\dth@counter{#2}%
      \ifx\@empty\dth@counter
        \def\@tempa{%
          \@oparg{\@begintheorem{#3}{}}[]%
        }%
      \else
        \H@refstepcounter{#2}%
        \hyper@makecurrent{#2}%
        \let\Hy@dth@currentHref\@currentHref
        \AddToHookNext{para/begin}{\MakeLinkTarget*{\Hy@dth@currentHref}}%
        \def\@tempa{%
          \@oparg{\@begintheorem{#3}{\csname the#2\endcsname}}[]%
        }%
      \fi
      \@tempa
    }%
\@clubpenalty \everypar{}%
\title[Periodic points of polynomials with good reduction]{Uniform bounds on periodic points of polynomials with good reduction}
\author{Isaac Rajagopal}
\address{Department of Mathematics, Massachusetts Institute of Technology}
\email{isaacraj@mit.edu}
\author{Robin Zhang}
\address{Department of Mathematics, Massachusetts Institute of Technology}
\email{robinz@mit.edu, robinzmit@gmail.com}
\date{May 4, 2026}
\begin{document}

\begin{abstract}
    We establish effective bounds
    on the number of periodic
    points of polynomials $\phi$
    defined over $p$-adic fields and number fields,
    under a mild reduction hypothesis
    that is satisfied by all unicritical polynomials
    $X^d + c$ with $c$ integral at some prime dividing $d$.
    As a consequence,
    we verify the uniform boundedness conjecture for
    this class of polynomials over number fields $K$,
    giving the explicit uniform bound $\#\mathrm{Per}_K(\phi) \leq d^{[K:\mathbb{Q}]}$.
\end{abstract}

\maketitle

\setcounter{tocdepth}{1}
\tableofcontents


\section{Introduction}
\label{sec:intro}
\subsection{Uniform boundedness of periodic points}
Given a set $S$ and
a function $\phi: S \lra S$,
there is a discrete dynamical system
given by the discrete iterations of $\phi$ on $S$.
Let $\phi^n$ denote the $n$-fold composition
$\phi \circ \phi \circ \cdots \circ \phi: S \lra S$.
One of the central problems in arithmetic dynamics is
determining the periodic points of $\phi$ when $S$
has some integral or rational structure.
\begin{definition}
    Define the set of \textit{periodic points} of $\phi$ in $S$ to be
    \[
        \Per_S(\phi) \coloneq \set{x \in S
        \mid \phi^n(x) = x \text{ for some } n \in \N}.
    \]
    We say that $x \in \Per_S(\phi)$ has \emph{period} $n$
    if $\phi^n(x) = x$,
    and say that $x$ has \emph{exact period} $n$
    if $n$ is the minimal positive integer such that $\phi^n(x) = x$.
\end{definition}

The dynamical uniform boundedness conjecture
of Morton and Silverman \cite{Morton94}
predicts that for endomorphisms $\phi$
on projective space over a number field $K$,
there is an upper bound on the number of periodic points
that depends only on the dimension $N$ of the space,
the degree $d$ of $\phi$, and
the degree $D \coloneq  [K:\Q]$ of the field extension.
Note that uniform boundedness for periodic
points is equivalent to uniform boundedness
for preperiodic points in many cases, e.g.
for unicritical polynomials $\phi_{d, c}(X) \coloneq  X^d + c$
by \cite{DP20}; we will mostly focus on periodic points
in this article.

\begin{conjecture}[Uniform boundedness conjecture for periodic points]
    \label{conj:uniform-boundedness}
    Fix integers $N \geq 1$, $d \geq 2$, and $D \geq 1$.
    There exists a constant $C(N, d, D) \in \N$
    such that for all number fields $K/\Q$ of degree $D$
    and all degree-$d$ morphisms $\phi: \P^N \lra \P^N$ defined over $K$,
    \[
        \# \Per_{\P^N(K)}(\phi) \leq C(N,d,D)\;.
    \]
\end{conjecture}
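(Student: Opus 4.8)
The plan is to reduce Conjecture~\ref{conj:uniform-boundedness} to a purely local question about bounding periods, and then to attack that question $\mathfrak p$-adically. First I would observe that a point of exact period $n$ is a fixed point of $\phi^n$, which is again an endomorphism of $\P^N$, now of degree $d^n$; the fixed-point scheme of such a morphism has degree bounded in terms of $N$ and $d^n$ only --- for $N=1$ it consists of exactly $d^n+1$ points counted with multiplicity, and in general one bounds it by intersecting the graph with the diagonal in $\P^N\times\P^N$. Consequently, if one can produce a bound $N_0=N_0(N,d,D)$ on the exact period of every $K$-rational periodic point, then
\[
  \#\Per_{\P^N(K)}(\phi)\ \le\ \sum_{n=1}^{N_0}\#\set{\text{fixed points of }\phi^n\text{ in }\P^N(K)}
\]
is bounded in terms of $N,d,D$ as well, so everything comes down to bounding the period.

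To bound the period I would work at primes of good reduction. Fix a prime $\mathfrak p$ of $K$ at which $\phi$ has good reduction, with residue field $k_\mathfrak p$ of size $q_\mathfrak p=p^{f}$, $f\le D$, and reduction $\bar\phi$. If $x\in\P^N(K)$ has exact period $n$ and $\bar x\in\P^N(k_\mathfrak p)$ has exact period $m$, then $m\mid n$ and $m\le\#\P^N(k_\mathfrak p)$. Analyzing the action of $\psi:=\phi^m$ on the residue disk containing $x$, which is a self-map of a $p$-adic polydisk fixing $\bar x$, one finds (following Morton--Silverman, Pezda, and Zieve) that $n/m$ divides $r\,p^{a}$, where $r$ is the multiplicative order in $k_\mathfrak p^\times$ of an eigenvalue of the linearization of $\psi$ at $\bar x$ --- so $r\mid q_\mathfrak p-1$, and $r=1$ in the super-attracting case, where that linearization vanishes modulo $\mathfrak p$ --- and $a$ is controlled by the ramification of $\mathfrak p$ over $p$. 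Running this at two primes $\mathfrak p_1\mid p_1$ and $\mathfrak p_2\mid p_2$ of good reduction with $p_1\ne p_2$, the $p_1$-power and $p_2$-power parts of $n$ cannot both be large, and one extracts a bound on $n$ in terms of $q_{\mathfrak p_1}$, $q_{\mathfrak p_2}$, and $D$.

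The hard part --- and the reason Conjecture~\ref{conj:uniform-boundedness} remains open in general --- is the last step: to make $N_0$ depend only on $N,d,D$ one needs a uniform bound on the size of the two smallest residue fields at which $\phi$ has good reduction, equivalently on the two smallest rational primes where $\phi$ has good reduction. No such bound is known, since an endomorphism of $\P^N$ over $K$ can have bad reduction at every small prime with no control of its ``conductor'' in terms of $N,d,D$ alone. The one situation where this obstruction disappears is when the data of $\phi$ itself forces good reduction at a prime that is automatically small, and this is exactly what the rest of the paper exploits for $\phi_{d,c}=X^d+c$. There the reduction hypothesis yields good reduction at a prime $\mathfrak p$ dividing $d$ --- hence with $p\le d$ --- and because the multiplier of any periodic cycle, namely $d^{m}\prod_{i=0}^{m-1}\bigl(\phi_{d,c}^{i}(x)\bigr)^{d-1}$, is divisible by $\mathfrak p$, every periodic cycle is super-attracting modulo $\mathfrak p$. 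A contraction-mapping argument on residue disks (periodic points of $\phi_{d,c}$ being automatically $\mathfrak p$-integral) then shows that each residue disk contains a single periodic point, that $n=m$, and hence that reduction is injective on $\Per_K(\phi_{d,c})$, giving
\[
  \#\Per_K(\phi_{d,c})\ \le\ \#\Per_{k_\mathfrak p}(\bar\phi_{d,c})\ \le\ q_\mathfrak p\ \le\ p^{D}\ \le\ d^{[K:\Q]},
\]
which is the explicit uniform bound announced in the abstract.
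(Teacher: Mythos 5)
The statement you were asked to prove is Conjecture~\ref{conj:uniform-boundedness}, which the paper itself records as open: it is not known unconditionally for any tuple $(N,d,D)$, not even $(1,2,1)$. So there is no proof in the paper to compare against, and your proposal does not supply one. To your credit, you do not pretend otherwise. You reduce the problem to bounding the exact period of a $K$-rational periodic point (correct: the fixed points of $\phi^n$ form a zero-dimensional scheme of degree controlled by $N$ and $d^n$), you run the standard Morton--Silverman/Pezda/Zieve local analysis at primes of good reduction, and you then state plainly that the missing ingredient is a uniform bound, in terms of $N$, $d$, $D$ alone, on the smallest residue fields at which $\phi$ has good reduction. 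That is exactly the genuine gap: no such bound is known, and without it the period bound $N_0$ depends on $\phi$ and not merely on $(N,d,D)$. Your proposal is therefore an accurate survey of a known reduction strategy together with an honest acknowledgment that its decisive step is unavailable; it is not a proof of the conjecture.

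Your closing paragraph, which specializes to $\phi_{d,c}=X^d+c$ with $c$ integral at a prime $\mathfrak{p}\mid d$, is essentially the content of the paper's Theorem~\ref{thm:uniform-boundedness}, proved via Proposition~\ref{prop:uniform-boundedness-p-adic} together with Lemmas~\ref{lem:expansion} and~\ref{lem:contraction}: the hypothesis $(\star)$ forces $\lvert\phi'(\lambda)\rvert<1$ at every periodic point (your ``super-attracting modulo $\mathfrak{p}$'' observation), an expansion argument confines all periodic points to $\mathcal{O}_{K,\mathfrak{p}}$, and a contraction argument shows that two distinct periodic points cannot lie in the same residue class, giving $\#\Per_K(\phi)\le \mathrm{N}_{K/\Q}(\mathfrak{p})\le d^{[K:\Q]}$. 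So your sketch of the special case is faithful to what the paper actually proves. But that special case is Theorem~\ref{thm:uniform-boundedness}, not Conjecture~\ref{conj:uniform-boundedness}; the conjecture in the generality stated remains unproved by your argument, for precisely the reason you identify.
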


Assuming one of Vojta's higher-dimensional
generalizations of the $abc$ conjecture
(see \cite[Conjecture 2.1]{looper-2021},
\cite[Conjecture 2.9]{zhang-2024}),
Looper \cite{looper-2021,looper-2025} conditionally proved
a weak form
(in which $C$ depends on $K$)
of Conjecture \ref{conj:uniform-boundedness}
for polynomials $\phi \in K[X]$.
However,
Conjecture~\ref{conj:uniform-boundedness} is not known
unconditionally for any tuple $(N, d, D)$,
even when $(N, d, D) = (1, 2, 1)$.
One goal of this paper is to
establish uniform bounds
in the $N=1$ case
for a class of polynomials
that are unconditional and explicit.
Since endomorphisms of $\P^1$ defined over $K$ are
defined only up to scaling by constants in $K^\times$,
we may (and do) normalize $\phi$
by multiplying by a constant so that
it is monic.

As an endomorphism of $\P^1(K)$,
we say that a polynomial
$\phi = \sum_{i=0}^d a_i X^i \in K[X]$ has \emph{good reduction}
at a prime $\mfp$ of $\calo_K$
if each $a_i$ is integral at $\mfp$
and the leading coefficient $a_d$
is a $\mfp$-unit,
i.e. $v_{\mfp}(a_i) \geq 0$ for all $i$
and $v_\mfp(a_d) = 0$.
We say that $\phi$
has \emph{inseparable good reduction}
if $\phi$ has good reduction and furthermore
$a_i \in \mfp$ for each $i$ coprime to $\mfp$.
For polynomials with
inseparable good reduction
at a prime $\mfp$,
we obtain a bound on periodic points
that depends only on the norm of $\mfp$;
 this yields
a simple uniform bound
$\# \Per_K(\phi) \leq d^D$
because $\mfp \mid d$.

\begin{theorem}
    \label{thm:uniform-boundedness}
    Fix integers $d \geq 2$ and $D \geq 1$.
    Let $K/\Q$ be a number field of degree $D$,
    $\mfp$ be a prime of $\calo_K$,
    and $\phi = \sum_{i=0}^d a_i X^i \in K[X]$
    be a polynomial with inseparable good reduction at $\mfp$.
    Then there is an upper bound in terms of
    the ideal norm of $\mfp$,
    \[
        \# \Per_K(\phi) \leq \rmn_{K/\Q}(\mfp),
    \]
     which in particular is uniformly bounded above by $d^D$.
\end{theorem}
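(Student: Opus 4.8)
The plan is to prove the stronger local statement that reduction modulo $\mfp$ is injective on $\Per_K(\phi)$. Granting this, $\#\Per_K(\phi) \le \#(\calo_K/\mfp) = \rmn_{K/\Q}(\mfp)$, and since $\rmn_{K/\Q}(\mfp) = p^{f}$ with residue degree $f \le [K:\Q] = D$ while $p \mid d$, we obtain $\rmn_{K/\Q}(\mfp) \le p^{D} \le d^{D}$, which is the asserted uniform bound.

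First I would check that every periodic point is $\mfp$-integral, so that reduction modulo $\mfp$ is defined on $\Per_K(\phi)$. After normalizing $\phi$ to be monic, all coefficients $a_i$ are $\mfp$-integral, so if $v_\mfp(x) < 0$ then the leading term dominates and $v_\mfp(\phi^n(x)) = d^n v_\mfp(x) \to -\infty$, which is incompatible with periodicity. Next I would extract the one structural consequence of the hypothesis: the reduced polynomial $\bar\phi$ over $\calo_K/\mfp$ is inseparable. Indeed, in $\bar\phi'(X) = \sum_{i\ge 1} i\,\bar a_i X^{i-1}$ every index $i$ is either coprime to $p$, in which case $\bar a_i = 0$ by hypothesis, or divisible by $p$, in which case $i = 0$ in the residue field of characteristic $p$; either way the $i$-th term vanishes, so $\bar\phi' \equiv 0$.

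The key estimate is then that $\phi$ contracts residue disks: if $x, y$ are $\mfp$-integral with $v_\mfp(x - y) \ge 1$, then $v_\mfp(\phi(x) - \phi(y)) \ge v_\mfp(x - y) + 1$. This follows by factoring $\phi(x) - \phi(y) = (x - y)\sum_{i=1}^{d} a_i \sum_{j=0}^{i-1} x^j y^{i-1-j}$; since $x \equiv y \pmod{\mfp}$ the inner sum is congruent to $i x^{i-1}$ modulo $\mfp$, so the cofactor is congruent to $\phi'(x)$ modulo $\mfp$, and its reduction is $\bar\phi'(\bar x) = 0$ by the previous step, forcing the cofactor into $\mfp$. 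Iterating this (which is legitimate because the forward orbit of an $\mfp$-integral point stays $\mfp$-integral, and the congruence modulo $\mfp$ propagates along it) yields $v_\mfp(\phi^n(x) - \phi^n(y)) \ge v_\mfp(x - y) + n$ for all $n \ge 1$. Now if $x, y \in \Per_K(\phi)$ have the same reduction, choose $N \ge 1$ with $\phi^N(x) = x$ and $\phi^N(y) = y$ (for instance $N$ equal to the least common multiple of their exact periods); then $v_\mfp(x - y) \ge v_\mfp(x - y) + N$, which forces $x = y$. This establishes injectivity of the reduction map, and with it the theorem.

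I do not expect a serious obstacle. The only genuine idea is recognizing that the integrality condition on the $a_i$ with $p \nmid i$, together with $p \mid d$, is precisely what makes $\bar\phi$ inseparable; once that is in hand the standard good-reduction mechanism (a periodic point is confined to a single residue disk, on which the map is a $v_\mfp$-contraction) finishes the count. The only point needing care is the bookkeeping that every iterate remains $\mfp$-integral and that the mod-$\mfp$ congruence is preserved along the orbit, both of which are immediate from monicity and $\mfp$-integrality of the coefficients.
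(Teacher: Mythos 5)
Your proposal is correct and is essentially the paper's proof: both show that periodic points are $\mfp$-integral via an expansion argument and that they occupy distinct residue classes modulo $\mfp$ via a contraction argument whose driving engine is exactly the vanishing of $\bar\phi'$ (which your phrasing makes especially transparent). The paper passes to the completion $K_\mfp$ and deduces the contraction from a Taylor expansion and the estimate $\verts{a_i i}_F < 1$, whereas you work directly over $K$ with a divided-difference factorization, but these are cosmetic variants of the same mechanism.
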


\begin{remark}
    \label{remark:local-comparison}
    Unlike earlier effective bounds on the number of periodic points
    which are given in terms of $d$, $D$, and
    the number $s$ of primes of bad reduction
    by \cite{benedetto-2007,Canci07,Canci10,CP16,CTV19,CV19,Morton94,Narkiewicz89,Troncoso17},
    our hypothesis assumes a single place of good reduction
    and yields an explicit uniform bound on $\#\Per_K(\phi)$
    that is independent of $s$.
    Examples of such bounds are
    \begin{itemize}
        \item Benedetto \cite{benedetto-2007}: $\#\Per_K(\phi) \leq O(s\log (s))$
            where the $O$ constant depends only on $d$ and $D$, and
        \item Canci--Vishkautsan \cite{CV19}:
            $\#\Per_K(\phi) \leq 2^{2^5s}d+2^{2^{77}s}$.
    \end{itemize}
    We also note an intriguing connection to
    the classical picture for abelian varieties $A$
    (see \cite[Theorem C.1.4]{hindry-silverman}):
    if an abelian variety $A$ has good reduction at
    a non-archimedean place $v$ of residue characteristic $p$,
    then the reduction map is injective on $A[n]$
    for every integer $n$ coprime to $p$.
    By contrast, the ``good reduction $\Rightarrow$ injectivity
    of reduction on torsion'' phenomenon
    that we recover in Theorem \ref{thm:uniform-boundedness}
    occurs only in the special situation $p \mid d$.
    This helps explain why the mild local condition in
    Theorem \ref{thm:uniform-boundedness}
    can already force strong uniformity.
\end{remark}

Note that a polynomial $\phi$ with good reduction at some prime $\mfp$
automatically has inseparable good reduction at $\mfp$
if it is postcritically bounded (\cite[Lemma 8]{feiner})
or if it is of the form $\phi(X^k)$ for $k$ divisible by $\mfp$.
For instance, Theorem \ref{thm:uniform-boundedness}
implies that the
upper bound $\#\Per_K(\phi) \leq d^D$ applies uniformly
to all unicritical polynomials $\phi_{d, c}$ with
$c \in  \bigcup_{\mfp \mid d} \, \calo_{K, \mfp}$,
where
\[
    \calo_{K, \mfp} \coloneq  \set{x \in K \mid v_\mfp(x) \geq 0}.
\]
The unicritical $(d, D) = (2, 1)$ case of
Theorem \ref{thm:uniform-boundedness}
recovers results of Walde--Russo
\cite[Corollaries 6 and 7]{walde-russo}.
The unicritical case of
Theorem \ref{thm:uniform-boundedness} also
implies part of the
recent $S$-integer uniform boundedness results
of Doyle--Hindes \cite[Corollary 1.2]{DH25} for fixed degree $d$, 
when $S$ does not contain
all of the prime factors of $d$.

The proof of Theorem \ref{thm:uniform-boundedness}
is based on showing that periodic points occupy
distinct residue classes in $\calo_K /\mfp \simeq \F_{p^f}$
for $f$ the residue degree of $\mfp$;
this separation is achieved by a $p$-adic contraction
(resp. expansion) argument
showing that points that are close to (resp. far from)
a periodic point of $\phi_{d, c}$ are
iteratively attracted to
(resp. repulsed from) their orbits; see Figure \ref{figurediagram} for a picture of this. 

\subsection{Applications to exact periods}
For polynomials of prime power degree,
we use additional $p$-adic analysis to
improve the upper bound on exact periods of $\phi$
beyond Theorem \ref{thm:uniform-boundedness}.
For ideals $A, B, C, D \subset \calo_K$,
define $m(A,B,C,D)$ to be the smallest positive integer $m$ such that $A \mid C \cdot m$ and $B \mid D \cdot m$.
For principal ideals generated by integers $a, b, c, d$,
observe that $m(a, b, c, d) 
= \lcm\paren{\frac{a}{\gcd(a, c)}, \frac{b}{\gcd(b, d)}}$.
\begin{theorem}
\label{thm:p-power-period}
    Let $K$ be a number field, $d = p^k$ be a prime power, and
    $\mfp$ be a prime of $\calo_K$ above $p$
    with residue degree $f$. Let $\phi = \sum_{i=0}^d a_i X^i \in K[X]$ be a degree-$d$ polynomial with good reduction at $\mfp$ such that $a_i \in \mfp$
    if $i \not \in \{0,d\}$. 
    If $f \mid k$ or $a_0 \in  \Q$,
    then each $x \in \Per_K(\phi)$ has period $m(\mfp, f, a_0, k)$.
\end{theorem}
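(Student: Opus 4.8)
The plan is to reduce the computation of periods to the dynamics of the mod-$\mfp$ reduction $\bar\phi$ of $\phi$ on the residue field $\F_{p^f}\cong\calo_K/\mfp$, and then to show $\bar\phi^{m}=\mathrm{id}$ there for $m=m(\mfp,f,a_0,k)$. First I would note that $\Per_K(\phi)\subseteq\calo_{K,\mfp}$: since $\phi$ is monic with $\mfp$-integral coefficients, a point $x$ with $v_\mfp(x)<0$ has $v_\mfp(\phi^n(x))=d^n v_\mfp(x)\to-\infty$, so it cannot be periodic. Thus reduction is defined on $\Per_K(\phi)$ and intertwines $\phi$ with $\bar\phi$. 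Because $d=p^k$, the hypothesis $\mfp\mid a_i$ for $0<i<d$ is stronger than the one in Theorem~\ref{thm:uniform-boundedness} (which requires it only for $i$ coprime to $p$), so the residue-class separation underlying that theorem applies here: reduction is injective on $\Per_K(\phi)$. Consequently, if $x\in\Per_K(\phi)$ has exact period $n$, its $n$ distinct orbit points reduce injectively onto the $\bar\phi$-orbit of $\bar x$, so $\bar x$ has exact period $n$ under $\bar\phi$ as well; then $\bar\phi^{m}=\mathrm{id}$ forces $n\mid m$, i.e. $\phi^m(x)=x$.

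Next I would make $\bar\phi$ explicit. The divisibility hypotheses and monic normalization kill all intermediate coefficients mod $\mfp$, so $\bar\phi(X)=X^{p^k}+b$ with $b:=\bar a_0\in\F_{p^f}$. Letting $\sigma$ be the $k$-th power of the $p$-power Frobenius of $\F_{p^f}/\F_p$, this reads $\bar\phi(X)=\sigma(X)+b$; since $\sigma$ generates $\Gal(\F_{p^f}/\F_{p^{\gcd(k,f)}})$, it has order $r:=f/\gcd(f,k)$. A one-line induction gives $\bar\phi^{n}(X)=\sigma^n(X)+N_n(b)$ with $N_n:=\sum_{j=0}^{n-1}\sigma^j$.

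The core step is then to check $\bar\phi^{m}=\mathrm{id}$ for $m=m(\mfp,f,a_0,k)$. The condition ``$f\mid km$'' is equivalent to $r\mid m$, so $\sigma^m=\mathrm{id}$ and $\bar\phi^{m}(X)=X+N_m(b)$; it remains to show $N_m(b)=0$ in $\F_{p^f}$. If $\mfp\mid a_0$ then $b=0$ and there is nothing to prove. Otherwise $v_\mfp(a_0)=0$, so ``$\mfp\mid a_0m$'' forces $v_\mfp(m)\ge1$, hence $p\mid m$; and the hypothesis ``$f\mid k$ or $a_0\in\Q$'' is exactly what makes $\sigma$ fix $b$ — when $f\mid k$ because then $\gcd(k,f)=f$ and $\sigma=\mathrm{id}$, and when $a_0\in\Q$ because then $b=\bar a_0$ lies in $\F_p$ (as $a_0\in\Q\cap\calo_{K,\mfp}=\Z_{(p)}$), which $\sigma$ fixes. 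With $\sigma(b)=b$ we get $N_m(b)=\sum_{j=0}^{m-1}\sigma^j(b)=mb=0$ since $p\mid m$ in characteristic $p$. Combining with the first paragraph completes the proof.

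The one genuinely delicate point is the vanishing of the partial ``trace'' $N_m(b)$: when $p\mid r$ (so $\sigma\ne\mathrm{id}$) and $N_r(b)=\mathrm{Tr}_{\F_{p^f}/\F_{p^{\gcd(k,f)}}}(b)\ne0$, the map $\bar\phi^{r}$ is translation by the nonzero element $N_r(b)$, so $\bar\phi^{r}\ne\mathrm{id}$ and the true period gains a factor of $p$ invisible to $m(\mfp,f,a_0,k)$. The content of the two admissible hypotheses is precisely that they suppress this term at $m=m(\mfp,f,a_0,k)$ — ``$f\mid k$'' by trivializing $\sigma$, and ``$a_0\in\Q$'' by confining $b$ to the prime field, on which $\sigma$ acts trivially. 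The remaining ingredients — $\mfp$-integrality of periodic points, the intertwining with $\bar\phi$, injectivity of reduction imported from Theorem~\ref{thm:uniform-boundedness}, and the iterate formula for $\bar\phi$ — are routine, so the argument comes down to this single characteristic-$p$ linear-algebra computation.
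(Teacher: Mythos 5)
Your proof is correct, and it closes the argument by a route that is meaningfully different from the paper's, though the two are built on the same two pillars (the contraction/separation phenomenon and the explicit computation of $\bar\phi^n$ on the residue field).

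The paper's proof (Proposition~\ref{prop:p-power-p-adic} via the embedding $K\hookrightarrow K_\mfp$) proves the stronger statement that there are \emph{exactly} $p^f$ periodic points, all of period $m$: it shows $\phi^m(X)-X\equiv X^{p^{km}}-X\pmod\varpi$ (Lemma~\ref{lemma2.5}), observes that every element of $\F_{p^f}$ is a root, lifts these $p^f$ simple roots via Hensel's lemma, and then invokes the upper bound $\#\Per\leq p^f$ from Proposition~\ref{prop:uniform-boundedness-p-adic}. You avoid Hensel entirely: instead of constructing period-$m$ points and counting, you directly transfer exact periods across the reduction map by using injectivity of reduction on $\Per_K(\phi)$, and then verify $\bar\phi^m=\mathrm{id}$. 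The key computation is the same — your $\bar\phi^n(X)=\sigma^n(X)+N_n(b)$ coincides with Lemma~\ref{lemma2.5}'s $\phi^n\equiv X^{d^n}+a_0n\pmod\varpi$ once one notes that the hypotheses force $\sigma(b)=b$, so $N_n(b)=nb$ — and your phrasing via the Frobenius affine map makes the role of the two admissible hypotheses especially transparent (trivializing $\sigma$ vs.\ confining $b$ to the prime field). For the stated conclusion (period bound only), your route is shorter and more self-contained; the paper's route is calibrated to also deliver the exact count and the dynatomic decomposition in Proposition~\ref{prop:p-power-p-adic}(b).

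One small attribution point: you appeal to ``the residue-class separation underlying Theorem~\ref{thm:uniform-boundedness}.'' That theorem's statement only asserts the cardinality bound; the injectivity-of-reduction fact you actually need is the content of Lemma~\ref{lem:contraction} (via the proof of Proposition~\ref{prop:uniform-boundedness-p-adic}), so you should cite the lemma rather than the theorem. This is cosmetic and does not affect correctness, since the hypothesis here (coefficients in $\mfp$ for all $i\notin\{0,d\}$, combined with $d=p^k$) does imply the $(\star)$ condition under which the lemma holds.
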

\begin{example}[see Corollary \ref{cor:2-general}]
    Let $K$ be a number field,
    $f$ be the residue degree of a
    prime of $\calo_K$ above $2$,
    and $c =\frac{r}{s} \in \Q$ with $s$ odd.
    Theorem \ref{thm:uniform-boundedness}
    implies that there are at most $2^f$ periodic points 
    of $\phi(X) \coloneq  X^2 + c$ in $K$.
    Furthermore, Theorem~\ref{thm:p-power-period} shows
    that all of the periodic points
    have period $f$ (resp. $2f$) when
    $rf$ is even (resp. $rf$ is odd).
\end{example}
Notice that Theorem \ref{thm:p-power-period}
yields an upper bound
$m(\mfp, f, a_0, k) \leq \lcm(p,f) \leq pf$
on exact periods that is stronger than
the general good reduction bounds
due to \cite{Morton94,Narkiewicz89,Pezda942,Pezda94,Zieve96}
(cf. \cite[Section 2.6]{Silverman07}),
such as:
\begin{itemize}
    \item Morton--Silverman \cite[Corollary B]{Morton94}:
    if $\phi$ has good reduction at two primes $\mfp_1$ and $\mfp_2$ over $p_1$ and $p_2$
    with residue degrees $f_1$ and $f_2$ respectively, 
    then its periodic points have exact period $\leq p_1^{2f_1}p_2^{2f_2}$; and
    \item Zieve \cite{Zieve96}: if $\phi$ has good reduction at $\mfp$ above $p$ with residue degree $f$ and ramification degree $e$,
    then its periodic points have exact period
    $\leq O(e\rmn_{K/\Q}(\mfp)) = O(ep^f)$.
\end{itemize}

\subsection{Outline}
In Section \ref{sec:p-adic-fields}, we prove versions of
Theorems \ref{thm:uniform-boundedness} and \ref{thm:p-power-period}
for periodic points of polynomials in $p$-adic fields.
In Section \ref{sec:number-fields}, we use the natural embedding of number fields
into their $\mfp$-adic completions to deduce
the main results over number fields
from the statements about periodic points in $p$-adic fields.

\subsection*{Acknowledgments}

We are grateful to Rob Benedetto, John Doyle,
Xander Faber, Nicole Looper,
and Jit Wu Yap
for helpful discussions and suggestions
on this paper.
The second author is
supported by the National Science Foundation
under Grant No. DMS-2303280.


\numberwithin{theorem}{section}

\section{Periodic points in \texorpdfstring{$p$}{p}-adic fields}
\label{sec:p-adic-fields}

Throughout this section,
let $F$ be a finite extension of $\Q_p$
with residue degree $f$ and
ring of integers $\calo_F$
with unique maximal ideal $\mfp_F$,
fixed uniformizer $\varpi$,
and valuation $v_F$.

\subsection{Bounds on the number of periodic points}
\label{subsec:uniformbound}

\begin{figure}[ht]
    \centering
    \begin{tikzpicture}[scale=0.7, every node/.style={font=\small}]

\node[scale =1.5] at (-5.5,5.5) {$\mathbb{Q}_2(\sqrt{5})$};

\draw[very thick] (0,0) circle (6);
\fill[lightgray!40] (0,0) circle (6);
\node[scale=1.5] at (-3.5,3.5) { $\mathbb{Z}_2[\alpha]$};

\draw[thick] (0,3.5) circle (2.4);
\draw[thick] (0,-3.5) circle (2.4);
\draw[thick] (3.5,0) circle (2.4);
\draw[thick] (-3.5,0) circle (2.4);
\fill[red!15] (0,3.5) circle (2.4);
\fill[orange!15]  (0,-3.5) circle (2.4);
\fill[blue!15] (3.5,0) circle (2.4);
\fill[green!15] (-3.5,0) circle (2.4);

\draw (0,3.5+0.4*3.5) circle (2.4*0.4);
\draw (0,3.5-0.4*3.5) circle (2.4*0.4);
\draw (0+0.4*3.5,3.5) circle (2.4*0.4);
\draw (0-0.4*3.5,3.5) circle (2.4*0.4);
\fill[red!30] (0,3.5+0.4*3.5) circle (2.4*0.4);
\fill[orange!30] (0,3.5-0.4*3.5) circle (2.4*0.4);
\fill[blue!30]  (0+0.4*3.5,3.5) circle (2.4*0.4);
\fill[green!30] (0-0.4*3.5,3.5) circle (2.4*0.4);
\node[circle, fill, inner sep=1.5pt, label=below:{$\alpha + 2\alpha$}] at (0,3.5+0.4*3.5){};
\node[circle, fill, inner sep=1.5pt, label=below:{$\alpha + 2\beta$}] at (0,3.5-0.4*3.5){};
\node[circle, fill, inner sep=1.5pt, label=below:{$\alpha + 2$}] at (0+0.4*3.5,3.5){};
\node[circle, fill, inner sep=1.5pt, label=below:{$\alpha$}] (A) at (0-0.4*3.5,3.5){};
\draw (A) edge[loop above] node {} (A);

\draw[->,>=Latex,semithick, bend right=10] (0,3.5+0.4*3.5) to (0-0.4*3.5,3.5+0.6);
\draw[->,>=Latex,semithick, bend right=10] (0.4*3.5,3.5) to (0-0.4*3.5+0.6,3.5);
\draw[->,>=Latex,semithick, bend left=10] (0,3.5-0.4*3.5) to (0-0.4*3.5,3.5-0.7);

\draw (0,-3.5+0.4*3.5) circle (2.4*0.4);
\draw (0,-3.5-0.4*3.5) circle (2.4*0.4);
\draw (0+0.4*3.5,-3.5) circle (2.4*0.4);
\draw (0-0.4*3.5,-3.5) circle (2.4*0.4);
\fill[red!30] (0,-3.5+0.4*3.5) circle (2.4*0.4);
\fill[orange!30] (0,-3.5-0.4*3.5) circle (2.4*0.4);
\fill[blue!30] (0+0.4*3.5,-3.5) circle (2.4*0.4);
\fill[green!30] (0-0.4*3.5,-3.5) circle (2.4*0.4);
\node[circle, fill, inner sep=1.5pt, label=below:{$\beta + 2\alpha$}] at (0,-3.5+0.4*3.5){};
\node[circle, fill, inner sep=1.5pt, label=below:{$\beta + 2\beta$}] at (0,-3.5-0.4*3.5){};
\node[circle, fill, inner sep=1.5pt, label=below:{$\beta + 2$}] at (0+0.4*3.5,-3.5){};
\node[circle, fill, inner sep=1.5pt, label=below:{$\beta$}] (B) at (0-0.4*3.5,-3.5){};
\draw[->,>=Latex,semithick, bend right=10] (0,-3.5+0.4*3.5) to (0-0.4*3.5,-3.5+0.6);
\draw[->,>=Latex,semithick, bend right=10] (0.4*3.5,-3.5) to (0-0.4*3.5+0.6,-3.5);
\draw[->,>=Latex,semithick, bend left=10] (0,-3.5-0.4*3.5) to (0-0.4*3.5,-3.5-0.7);
\draw (B) edge[loop above] node {} (B);

\draw (3.5,0.4*3.5) circle (2.4*0.4);
\draw (3.5,-0.4*3.5) circle (2.4*0.4);
\draw (3.5+0.4*3.5,0) circle (2.4*0.4);
\draw (3.5-0.4*3.5,0) circle (2.4*0.4);
\fill[red!30] (3.5,0.4*3.5) circle (2.4*0.4);
\fill[orange!30] (3.5,-0.4*3.5) circle (2.4*0.4);
\fill[blue!30] (3.5+0.4*3.5,0) circle (2.4*0.4);
\fill[green!30] (3.5-0.4*3.5,0) circle (2.4*0.4);
\node[circle, fill, inner sep=1.5pt, label=below:{$-1 + 2\alpha$}](-12alpha) at (3.5,0.4*3.5){};
\node[circle, fill, inner sep=1.5pt, label=below:{$-1 + 2\beta$}](-12beta) at (3.5,-0.4*3.5){};
\node[circle, fill, inner sep=1.5pt, label=below:{$-1 + 2$}](-12) at (3.5+0.4*3.5,0){};
\node[circle, fill, inner sep=1.5pt, label=below:{$-1$}] (-1) at (3.5-0.4*3.5,0){};

\draw (-3.5,0.4*3.5) circle (2.4*0.4);
\draw (-3.5,-0.4*3.5) circle (2.4*0.4);
\draw (-3.5+0.4*3.5,0) circle (2.4*0.4);
\draw (-3.5-0.4*3.5,0) circle (2.4*0.4);
\fill[red!30]  (-3.5,0.4*3.5) circle (2.4*0.4);
\fill[orange!30] (-3.5,-0.4*3.5) circle (2.4*0.4);
\fill[blue!30]  (-3.5+0.4*3.5,0) circle (2.4*0.4);
\fill[green!30] (-3.5-0.4*3.5,0) circle (2.4*0.4);
\node[circle, fill, inner sep=1.5pt, label=below:{$2\alpha$}](2alpha) at (-3.5,0.4*3.5){};
\node[circle, fill, inner sep=1.5pt, label=below:{$2\beta$}](2beta) at (-3.5,-0.4*3.5){};
\node[circle, fill, inner sep=1.5pt, label=below:{$2$}](2) at (-3.5+0.4*3.5,0){};
\node[circle, fill, inner sep=1.5pt, label=below:{$0$}] (0) at (-3.5-0.4*3.5,0){};

\draw[->,>=Latex,semithick, bend right=20] (-3.5-0.4*3.5,0) to (-1);
\draw[->,>=Latex,semithick, bend left=34] (3.5-0.4*3.5,0) to (0);

\node (mergepoint) at (0,0) {};

\draw[->,>=Latex,semithick] (2) to[out=0,in=180] (mergepoint);
\draw[->,>=Latex,semithick] (2alpha) to[out=-10,in=180] (mergepoint);
\draw[->,>=Latex,semithick] (2beta) to[out=10,in=180] (mergepoint);
\draw[->,>=Latex,semithick] (-0.4,0) -- (3.5-0.4*3.5-0.6,0);

\node (mergepoint) at (3.5*0.44,3.5*0.27) {};
\draw[->,>=Latex,semithick] (-12) to[out=160,in=0] (mergepoint);
\draw[->,>=Latex,semithick] (-12alpha) to[out=192,in=0] (mergepoint);
\draw[->,>=Latex,semithick] (-12beta) to[out=90,in=0] (mergepoint);
\draw[->,>=Latex,semithick] ({3.5*0.52}, {3.5*0.27}) to[out=180,in=10] ({-3.5-0.4*3.5}, {0.5});

\draw[->,>=Latex,semithick] (6.5,0) to (7.5,0);
\draw[->,>=Latex,semithick] (-6.5,0) to (-7.5,0);
\draw[->,>=Latex,semithick] (0,6.5) to (0,7.5);
\draw[->,>=Latex,semithick] (0,-6.5) to (0,-7.5);

\end{tikzpicture}
    \caption{In $F = \mathbb{Q}_2(\sqrt{5})$, the map $\phi(X) = X^2-1$ has two fixed points, $\alpha = \frac{1+\sqrt{5}}{2}$ and $\beta = \frac{1-\sqrt{5}}{2}$, and two period-$2$ points, $0$ and $1$. Their nearby points are attracted, while faraway points are repelled. The largest circle represents $\calo_F = \Z_2[\alpha]$. The four medium-size circles represent open balls of radius $1$. The sixteen smaller circles represent open balls of radius $\frac{1}{2}$.}
    \label{figurediagram}
\end{figure}
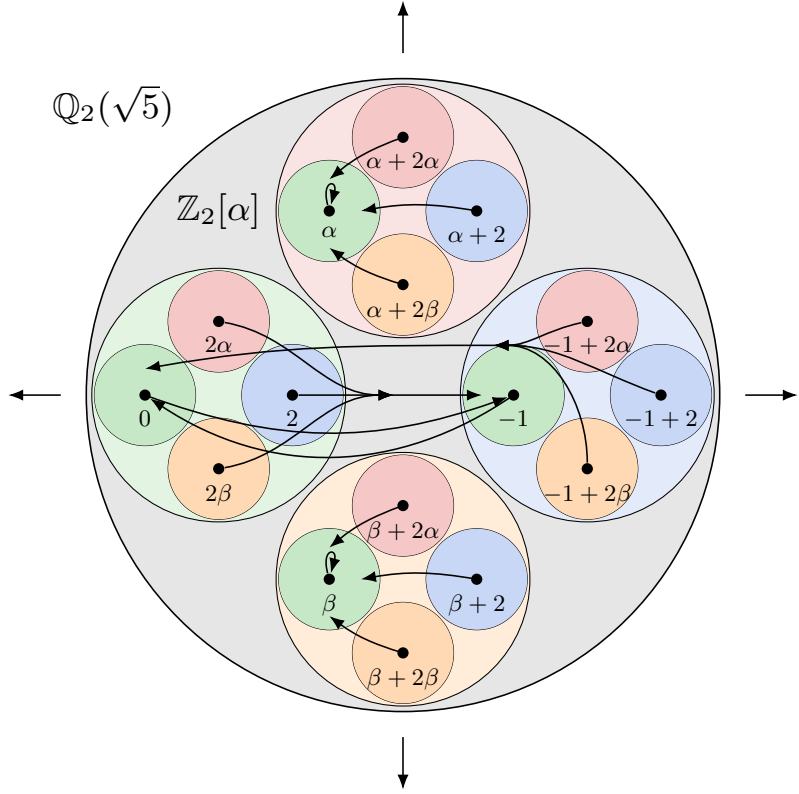
In this section, we prove a bound on the number of $p$-adic periodic points
of polynomials. This will then
be used to prove Theorem \ref{thm:uniform-boundedness} in Section \ref{subsec:numberfields}. To visualize the contraction and expansion argument used throughout this section, see Figure \ref{figurediagram}. Throughout this section, let $p \mid d$.
We consider polynomials of the form
\begin{itemize}[label=(\(\star\))]
    \item $\phi(X) = \displaystyle\sum_{i=0}^d a_iX^i \in \calo_F[X]\text{ such that } v_F(a_d) = 0 \text{ and } v_F(a_i) > 0 \text{ if } p \nmid i.$
\end{itemize}
\begin{proposition}
\label{prop:uniform-boundedness-p-adic}
    Let $\phi \in \calo_F[X]$ be a polynomial satisfying $(\star)$.
    Then $\phi$ has at most $p^f$ periodic points in $F$.
\end{proposition}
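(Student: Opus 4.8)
The plan is to show that the reduction map $\calo_F \to \calo_F/\mfp_F \cong \F_{p^f}$ restricts to an \emph{injection} on $\Per_F(\phi)$; since the target has $p^f$ elements, this immediately yields $\#\Per_F(\phi) \leq p^f$. The first step is to check that $\Per_F(\phi) \subseteq \calo_F$ at all: if $v_F(x) < 0$, then because $v_F(a_d) = 0$ while $a_0, \dots, a_{d-1} \in \calo_F$, the monomial $a_d x^d$ strictly dominates the others, so $v_F(\phi(x)) = d\, v_F(x) < v_F(x)$; iterating, $v_F(\phi^n(x)) = d^n v_F(x) \to -\infty$, so $x$ cannot be periodic.

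The heart of the argument is a \emph{$p$-adic contraction estimate}: for $x, y \in \calo_F$ with $x \equiv y \pmod{\mfp_F}$,
\[
    v_F\paren{\phi(x) - \phi(y)} \geq v_F(x - y) + 1 .
\]
To prove it, expand $\phi(x) - \phi(y) = (x - y)\sum_{i=1}^d a_i\paren{x^{i-1} + x^{i-2}y + \cdots + y^{i-1}}$. Modulo $\mfp_F$ we have $y \equiv x$, so each inner sum satisfies $x^{i-1} + \cdots + y^{i-1} \equiv i\,x^{i-1} \pmod{\mfp_F}$, whence $\sum_{i=1}^d a_i\paren{x^{i-1} + \cdots + y^{i-1}} \equiv \sum_{i=1}^d i\,a_i x^{i-1} = \phi'(x) \pmod{\mfp_F}$. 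Now every coefficient $i\,a_i$ of $\phi'$ lies in $\mfp_F$: if $p \nmid i$ this is exactly the hypothesis $(\star)$, and if $p \mid i$ then $i \in p\calo_F \subseteq \mfp_F$ since $F$ has residue characteristic $p$. Hence $\phi'(x) \equiv 0 \pmod{\mfp_F}$, so the second factor has positive valuation and the estimate follows. (Note $(\star)$ already forces $p \mid d$, since otherwise $v_F(a_d) > 0$, contradicting $v_F(a_d) = 0$.)

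From here I would iterate. The estimate in particular shows that $x \equiv y \pmod{\mfp_F}$ implies $\phi(x) \equiv \phi(y) \pmod{\mfp_F}$, so the inductive hypothesis is preserved along the orbit and
\[
    v_F\paren{\phi^n(x) - \phi^n(y)} \geq v_F(x - y) + n \quad \text{for all } n \geq 0 .
\]
Finally, if $x, y \in \Per_F(\phi)$ satisfy $x \equiv y \pmod{\mfp_F}$, pick a common period $n \geq 1$ — for instance the least common multiple of the exact periods of $x$ and $y$ — so that $\phi^n(x) = x$ and $\phi^n(y) = y$. Then $v_F(x - y) \geq v_F(x - y) + n$, which forces $v_F(x - y) = \infty$, i.e.\ $x = y$. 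Thus reduction is injective on $\Per_F(\phi)$, giving $\#\Per_F(\phi) \leq \#(\calo_F/\mfp_F) = p^f$.

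Regarding difficulty: no single step is deep, so the main obstacle is organizational — arranging the computation so that hypothesis $(\star)$ transparently does the work, by recognizing the divided-difference sum $\sum_i a_i(x^{i-1}+\cdots+y^{i-1})$ as a congruent copy of $\phi'(x)$ and observing that $(\star)$ together with $p$ being the residue characteristic forces $\phi' \equiv 0 \pmod{\mfp_F}$. One should also take care that the congruence $x \equiv y \pmod{\mfp_F}$ genuinely propagates under $\phi$, which is precisely what the contraction estimate guarantees and what makes the induction legitimate.
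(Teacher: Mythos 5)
Your proof is correct, and the overall strategy matches the paper's: an expansion estimate shows $\Per_F(\phi) \subseteq \calo_F$, and a contraction estimate then separates periodic points into distinct residue classes modulo $\mfp_F$, of which there are $p^f$. The technical route you take, however, differs from the paper's and is worth comparing. The paper's Lemma~\ref{lem:contraction} establishes the separation via a Taylor expansion of $\phi$ around the periodic iterate $\mu = \phi^{m-1}(\lambda)$, bounding each term $a_i \binom{i}{k}\mu^{i-k}(y-\mu)^k$ individually to get a strict decrease $\verts{\phi^m(x)-\phi^m(\lambda)}_F < \verts{\phi^{m-1}(x)-\phi^{m-1}(\lambda)}_F$. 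You instead use the divided-difference factorization $\phi(x)-\phi(y) = (x-y)\sum_{i=1}^d a_i(x^{i-1}+\cdots+y^{i-1})$ and recognize the cofactor as $\phi'(x)$ modulo $\mfp_F$, which vanishes because $(\star)$ together with $p$ being the residue characteristic forces every coefficient $i\,a_i$ of $\phi'$ into $\mfp_F$. Two small advantages of your formulation: the contraction estimate $v_F(\phi(x)-\phi(y)) \geq v_F(x-y)+1$ is symmetric in $x$ and $y$ (neither need be periodic) and it is quantitative, iterating to $v_F(\phi^n(x)-\phi^n(y)) \geq v_F(x-y)+n$; taking $n$ to be a common period then forces $v_F(x-y)=\infty$ directly, avoiding the orbit-collision case analysis the paper performs at the start of Lemma~\ref{lem:contraction}. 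At bottom both arguments rest on the same fact --- that $(\star)$ makes $\phi' \equiv 0 \pmod{\mfp_F}$ --- so the proofs are morally the same, but yours packages the contraction a bit more cleanly.
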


To prove Proposition \ref{prop:uniform-boundedness-p-adic}, we begin with an expansion argument that rules out periodic points $\lambda$ of $\phi$ with $\verts{\lambda}_F > 1$.
\begin{lemma}
    \label{lem:expansion}
    Let $\phi$ be as in ($\star$). If $\lambda \in F$ with $\verts{\lambda}_F > 1$, then $\lambda$ is not a periodic point of $\phi$.
\end{lemma}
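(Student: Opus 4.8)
The plan is to show that, outside the closed unit ball, the leading term of $\phi$ completely dominates the size of $\phi(\lambda)$, so that the forward orbit of $\lambda$ escapes to infinity and can never return to $\lambda$. First I would record the basic ultrametric estimate. Since $(\star)$ forces $\phi \in \calo_F[X]$, every coefficient satisfies $\verts{a_i}_F \leq 1$, while $\verts{a_d}_F = 1$. Hence for $\lambda$ with $\verts{\lambda}_F > 1$ the monomial sizes $\verts{a_i \lambda^i}_F = \verts{a_i}_F \verts{\lambda}_F^i$ obey $\verts{a_i \lambda^i}_F \leq \verts{\lambda}_F^{i} < \verts{\lambda}_F^d = \verts{a_d \lambda^d}_F$ for every $0 \leq i < d$. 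Because the maximum among the $d+1$ terms of $\phi(\lambda) = \sum_i a_i \lambda^i$ is attained at the single index $i = d$, the strong triangle inequality is an equality there, giving $\verts{\phi(\lambda)}_F = \verts{\lambda}_F^d$.

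In particular $\verts{\phi(\lambda)}_F = \verts{\lambda}_F^d > \verts{\lambda}_F > 1$, so $\phi(\lambda)$ again lies outside the unit ball and the estimate can be iterated. A straightforward induction then yields $\verts{\phi^n(\lambda)}_F = \verts{\lambda}_F^{d^n}$ for every $n \geq 1$. Since $d \geq 2$ we have $d^n \geq 2$, and together with $\verts{\lambda}_F > 1$ this gives $\verts{\phi^n(\lambda)}_F = \verts{\lambda}_F^{d^n} > \verts{\lambda}_F$ for all $n \geq 1$. Therefore $\phi^n(\lambda) \neq \lambda$ for every $n \geq 1$, i.e. $\lambda \notin \Per_F(\phi)$.

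There is no serious obstacle in this argument; the only step requiring care is the passage from the strong triangle inequality to an equality, which is precisely the standard fact that $\verts{x + y}_F = \max(\verts{x}_F, \verts{y}_F)$ when $\verts{x}_F \neq \verts{y}_F$, applied inductively to the terms of $\phi(\lambda)$. It is worth noting that the finer hypothesis $v_F(a_i) > 0$ for $p \nmid i$ is not used here: only $\phi \in \calo_F[X]$ and $v_F(a_d) = 0$ enter, and that local condition will instead be needed for the contraction argument that controls the periodic points lying inside the unit ball.
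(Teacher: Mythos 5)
Your proposal is correct and follows essentially the same ultrametric escape-to-infinity argument as the paper: the leading term dominates strictly when $\verts{\lambda}_F > 1$, forcing $\verts{\phi(\lambda)}_F = \verts{\lambda}_F^d > \verts{\lambda}_F$, and iterating gives a strictly increasing sequence of absolute values. The only cosmetic difference is that you make the induction explicit with the closed formula $\verts{\phi^n(\lambda)}_F = \verts{\lambda}_F^{d^n}$, whereas the paper simply records the strict monotonicity of $\verts{\phi^n(\lambda)}_F$; both conclude identically.
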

\begin{proof}
    Since $\verts{\lambda}_F > 1$,
    we have $\verts{\lambda^d}_F > \verts{\lambda^{d-1}}_F
    > \cdots > \verts{\lambda}_F > 1$.
    By $(\star)$, we have $\verts{a_i}_F \leq 1$ for all $i$,
    and $\verts{a_d}_F = 1$.
    So, $\verts{a_d \lambda^d}_F > \verts{a_i \lambda^i}_F$
    for all $i < d$. This gives
    \[
        \verts{\phi(\lambda)}_F
            = \verts{\sum_{i=0}^d a_i \lambda^i}_F 
            = \max_i \Big(\verts{a_i \lambda^i}_F\Big)
            = \verts{a_d \lambda^d}_F > \verts{\lambda}_F.
    \]
    Hence, $\verts{\lambda}_F < \verts{\phi(\lambda)}_F
    < \verts{\phi^2(\lambda)}_F < \verts{\phi^3(\lambda)}_F
    < \cdots\;,$
    so $\lambda$ is not periodic.
\end{proof}

Next, we give a contraction argument that says that
periodic points of $\phi$ cannot be too close together,
since each periodic point will attract points near it.
This attraction occurs because $\verts{\phi'(\lambda)}_F < 1$ for any periodic $\lambda$, by ($\star$) and Lemma \ref{lem:expansion}.

\begin{lemma}
    \label{lem:contraction}
    Let $\phi$ be as in $(\star)$.
    If $\lambda \in F$ is a periodic point of $\phi$
    and $x \in F - \{\lambda\}$ such that $\verts{x-\lambda}_F < 1$,
    then $x$ is not a periodic point of $\phi$.
\end{lemma}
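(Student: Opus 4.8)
The plan is to exploit the fact, flagged just before the statement, that $\verts{\phi'(\mu)}_F < 1$ at every periodic point $\mu$, and then run a $p$-adic contraction argument showing that the entire forward orbit of $x$ drifts monotonically toward the cycle of $\lambda$, so that $x$ can never return to itself.

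First I would record the derivative estimate. If $\mu \in F$ is periodic, then $\verts{\mu}_F \leq 1$ by Lemma \ref{lem:expansion}, so $\mu \in \calo_F$, and $\phi'(\mu) = \sum_{i=1}^d i\, a_i \mu^{i-1}$. For each index $i$ with $p \nmid i$ the corresponding term has $\verts{i\, a_i \mu^{i-1}}_F \leq \verts{a_i}_F < 1$ by $(\star)$; for each $i$ with $p \mid i$ (a set that includes $i = d$) the term has $\verts{i\, a_i \mu^{i-1}}_F \leq \verts{i}_F \leq \verts{p}_F < 1$. By the ultrametric inequality, $\verts{\phi'(\mu)}_F < 1$.

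Next, let $n$ be the exact period of $\lambda$ and put $\psi = \phi^n$, so $\psi(\lambda) = \lambda$ and, by the chain rule, $\psi'(\lambda) = \prod_{j=0}^{n-1} \phi'(\phi^j(\lambda))$; since each $\phi^j(\lambda)$ is periodic, the previous paragraph gives $\verts{\psi'(\lambda)}_F < 1$. Because $\phi \in \calo_F[X]$ we have $\psi \in \calo_F[X]$, and since $\lambda \in \calo_F$, expanding $\psi$ about $\lambda$ gives $\psi(X) = \lambda + \sum_{k \geq 1} c_k (X - \lambda)^k$ with every $c_k \in \calo_F$ and $c_1 = \psi'(\lambda)$. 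Hence for any $y \in F$ with $0 < \verts{y - \lambda}_F < 1$ the $k=1$ term of $\psi(y) - \lambda$ has absolute value $\verts{c_1}_F \verts{y-\lambda}_F < \verts{y-\lambda}_F$, while every term with $k \geq 2$ has absolute value $\leq \verts{y-\lambda}_F^{\,2} < \verts{y-\lambda}_F$; so $\verts{\psi(y) - \lambda}_F < \verts{y - \lambda}_F$.

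Finally I would apply this with $y = x$ and iterate: along the orbit $x, \psi(x), \psi^2(x), \dots$ the distance to $\lambda$ strictly decreases as long as the orbit has not reached $\lambda$, and once it reaches $\lambda$ it stays there, still at distance $\verts{x - \lambda}_F > 0$ from $x$. In either case no $\psi$-iterate of $x$ equals $x$, so $x$ is not $\psi$-periodic; since every $\phi$-periodic point is $\phi^n$-periodic, $x$ is not $\phi$-periodic. I expect the only mildly delicate point to be the bookkeeping in this last step (allowing for the orbit to land exactly on $\lambda$), and one should double-check that no periodicity issue is hidden there; the derivative bound and the integrality of the Taylor coefficients are routine consequences of $(\star)$ together with $\phi \in \calo_F[X]$ and Lemma \ref{lem:expansion}.
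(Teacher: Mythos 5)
Your proof is correct, and it runs the same underlying contraction idea as the paper but reorganizes it in a way worth flagging. The paper works with $\phi$ itself and proves by induction that $\verts{\phi^m(x)-\phi^m(\lambda)}_F$ is strictly decreasing: at each step it Taylor-expands $\phi$ about the point $\mu = \phi^{m-1}(\lambda)$ on the cycle, bounding the coefficients via $\verts{a_i\, i}_F < 1$ and $\verts{a_i \binom{i}{k}}_F \leq 1$ directly from $(\star)$. It also dispatches the ``orbit merges'' possibility $\phi^s(x) = \phi^s(\lambda)$ with a separate short contradiction at the outset. You instead pass to $\psi = \phi^n$ (so $\lambda$ becomes a fixed point), compute the cycle multiplier $\psi'(\lambda) = \prod_{j=0}^{n-1}\phi'(\phi^j(\lambda))$ via the chain rule, derive $\verts{\psi'(\lambda)}_F < 1$ from the termwise derivative bound at each periodic point, and then do a single Taylor expansion of $\psi$ about $\lambda$, with integrality of the coefficients coming from $\psi\in\calo_F[X]$ and $\lambda\in\calo_F$. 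This buys you cleaner bookkeeping: there is only one expansion (and its constant term vanishes since $\psi(\lambda)=\lambda$), and the orbit-merging case is absorbed naturally into the iteration rather than handled up front, since once $\psi^k(x)=\lambda$ the sequence is constant and can never return to $x\neq\lambda$. The paper's version stays a little closer to the raw hypotheses of $(\star)$ and avoids introducing the multiplier viewpoint; yours is slightly more conceptual and matches the standard attracting-cycle picture in $p$-adic dynamics. The one point you flagged as possibly delicate --- the orbit landing exactly on $\lambda$ --- you did in fact handle correctly, so there is no gap.
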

\begin{proof}
    Let $\lambda$ have exact period $r$. First, assume $x$ is periodic and assume there is some $s > 0$ such that $\phi^s(x) = \phi^s(\lambda) =: \nu$. Then, $\nu$ has exact period $r$, so $x$ has exact period $r$ too. Then $\lambda = \phi^{r-s}(\nu) =x$, which is a contradiction. So, we can assume that $\phi^s(x) \neq \phi^s(\lambda)$ for all $s \geq 1$. 

    Define the $0$-th iterate $\phi^{0}(a) = a$. We show by induction on $n$ that for all $n \geq 0$, then \[ 0 <\verts{ \phi^n(x)-\phi^n(\lambda)}_F < \verts{\phi^{n-1}(x) - \phi^{n-1}(\lambda)}_F< \cdots < \verts{\phi(x) - \phi(\lambda)}_F < \verts{x-\lambda}_F < 1.\] With $n=0$ this follows from our assumptions. Now, let $m\geq 1$ and assume this hypothesis is true for $n = m-1$. We will show it holds for $n = m$. Let $\mu = \phi^{m-1}(\lambda)$ and $y = \phi^{m-1}(x)$. Using a Taylor expansion around $\mu$, with $\phi^{(k)}$ denoting the $k$-th derivative of $\phi$, we have \[
        \phi^{m}(x) = \phi(y) = \sum_{k=0}^d \frac{1}{k!} \phi^{(k)}(\mu)(y-\mu)^k = \sum_{k=0}^d\sum_{i=k}^d a_i \binom{i}{k} \mu^{i-k}(y-\mu)^k.
    \]
    In this expansion, the $k=0$ term is simply $\phi(\mu) = \phi^m(\lambda)$, so we can subtract it from both sides. Notice that $\verts{\mu}_F \leq 1$ since $\mu$ is periodic, by Lemma~\ref{lem:expansion}, and $0 < \verts{y-\mu}_F <1$ by the inductive hypothesis. Also, for all $i$, we have $\verts{a_ii}_F  <1$ and $\verts{a_i\binom{i}{k}}_F \leq 1$ by the conditions of $(\star)$. Using all of these facts gives 
\[
\verts{\phi^m(x)-\phi^m(\lambda)}_F \leq \max_{\substack{1 \leq k \leq d \\ k \leq i \leq d}}\verts{a_i \binom{i}{k} \mu^{i-k}(y-\mu)^k}_F< \verts{y-\mu}_F.
\]
Therefore, $0 < \verts{\phi^m(x)-\phi^m(\lambda)}_F < \verts{\phi^{m-1}(x)-\phi^{m-1}(\lambda)}_F < 1$, so our inductive hypothesis holds. Therefore, $x$ is not periodic since its iterates approach the iterates of $\lambda$ monotonically.
\end{proof}

\begin{remark}
    We are grateful to Nicole Looper for pointing out to us
    that Feiner \cite[Proposition 10]{feiner} provides
    a backwards-iteration analogue of Lemma \ref{lem:contraction}
    in a more general Berkovich framework.
    Adapting Feiner's geometric viewpoint to our
    forwards-iteration result appears to be a natural route to broader generalizations.
\end{remark}

Altogether, the contraction and expansion arguments
give a separation of periodic points into distinct residue classes.
This allows the proof of Proposition \ref{prop:uniform-boundedness-p-adic}, which will be the main tool in the proof of Theorem \ref{thm:uniform-boundedness}.

\begin{proof}[Proof of Proposition \ref{prop:uniform-boundedness-p-adic}]
    By Lemma \ref{lem:expansion}, all periodic points $\lambda$ of $\phi$
    satisfy $\verts{\lambda}_F \leq 1$, meaning they are in $\calo_F$. Since $F$ has residue degree $f$ as an extension of $\Q_p$, we have
    $\rmn_{F/\Q_p}(\mfp) = p^f$ and
    $\calo_F/\mfp_F \simeq \F_{p^f}$ (cf. \cite[Theorem 6.4.6]{gouvea}).
    By Lemma~\ref{lem:contraction},
    all periodic points of $\phi$ in $\calo_F$ 
    must be in distinct residue classes of $\calo_F/\mfp_F$.
    Hence there are at most $p^f$
    many periodic points of $\phi$.
\end{proof}

\subsection{Bounds on exact periods in prime power degree}
We now establish bounds on the possible periods for $p$-adic
periodic points of prime-power-degree polynomials.
This will then be used to prove Theorem \ref{thm:p-power-period}
in Section \ref{subsec:numberfields}. Throughout this section, let $d = p^k$ for some positive integer $k$.
We consider polynomials of the form:
\begin{itemize}[label=(\(\star\star\))]
    \item $\phi(X) = \displaystyle\sum_{i=0}^d a_i X^i \in \calo_F[X] \text{ such that } a_d = 1 \text{ and } v_F(a_i) > 0 \text { if }i \neq 0,d.$
\end{itemize}
Note that these polynomials are all of the form $(\star)$ from Section \ref{subsec:uniformbound}.

Define the dynatomic polynomial of $\phi$:\footnote{
It is nontrivial to see that this formula defines a polynomial;
see \cite[Section 4.1]{Silverman07}.
Here, $\mu$ is the M\"obius function, which is defined by 
$\mu(j) = 0$ if $j$ is divisible by the square of some prime, and 
$\mu(j) = (-1)^k$ if $j$ is the product of $k$ distinct primes.}
\begin{equation}\label{defdynatomic}
    \Phi_{n}(X) \coloneq \prod_{i \mid n} (\phi^i(X)-X)^{\mu(n/i)}\;.
\end{equation}

As in Section \ref{sec:intro},
define $m(a,b,c,d) \in \N$ for $a,b,c,d \in \calo_F$
to be the smallest positive integer $m$
such that $a \mid cm$ and $b \mid dm$.

\begin{proposition}
\label{prop:p-power-p-adic}
    Let $\phi \in \calo_F[X]$ be a polynomial satisfying $(\star \star)$.
    Assume that $f \mid k$ or $a_0 \in \Q$.
    \begin{enumerate}[label = (\alph*)]
        \item There are exactly $p^f$
            many periodic points of $\phi$ in $F$,
            all with exact period dividing $m = m(\varpi,f,a_0,k)$.
        \item If $km = f$, then for each $n \mid m$,
            there are exactly
            $\deg_X(\Phi_{n}(X))$ many periodic points of
            $\phi$ of exact period $n$ in $F$.
    \end{enumerate}
\end{proposition}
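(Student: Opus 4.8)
The plan is to pass to the residue field $\F_{p^f} = \calo_F/\mfp_F$, where $(\star\star)$ makes $\phi$ reduce to the explicit map $\bar\phi(x) = x^{p^k} + \bar a_0$; to check that the iterate $\bar\phi^{\,m}$, with $m = m(\varpi,f,a_0,k)$, is the identity permutation of $\F_{p^f}$; and then to lift each of the $p^f$ residue classes to a genuine periodic point of $\phi$ by a $p$-adic contraction-mapping argument. This upgrades the bound $\#\Per_F(\phi) \le p^f$ of Proposition \ref{prop:uniform-boundedness-p-adic} to an equality and simultaneously shows that every periodic point is fixed by $\phi^m$, which gives (a); part (b) will then drop out by combining this with the standard dynatomic factorization of $\phi^m(X) - X$.

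For the reduction step I would write $\sigma$ for the automorphism $x \mapsto x^{p^k}$ of $\F_{p^f}$ --- that is, the $k$-th power of Frobenius --- so that $\sigma^j = \mathrm{id}$ exactly when $f \mid kj$. Since the middle coefficients of $\phi$ lie in $\mfp_F$, we have $\bar\phi(x) = \sigma(x) + \bar a_0$, and the hypothesis ``$f \mid k$ or $a_0 \in \Q$'' is precisely what forces $\sigma(\bar a_0) = \bar a_0$: if $f \mid k$ then $\sigma = \mathrm{id}$, and if $a_0 \in \Q$ then $a_0 \in \Q \cap \calo_F = \Z_{(p)}$, so $\bar a_0$ lies in the prime subfield $\F_p$ and is fixed by $\sigma$. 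An easy induction then gives $\bar\phi^{\,n}(x) = \sigma^n(x) + n\bar a_0$ for all $n \ge 0$. Taking $n = m$ and using the two defining properties of $m = m(\varpi,f,a_0,k)$ --- namely $f \mid km$, so that $\sigma^m = \mathrm{id}$, and $\varpi \mid a_0 m$, so that $m\bar a_0 = 0$ in $\F_{p^f}$ (either because $\bar a_0 = 0$ when $v_F(a_0) \ge 1$, or because $p \mid m$ when $v_F(a_0) = 0$) --- yields $\bar\phi^{\,m} = \mathrm{id}$ on $\F_{p^f}$. I expect this to be the crux of the proof: it is where the hypothesis on $\phi$ is used, and where one must reconcile the arithmetic definition of $m$ with the dynamical requirement.

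For the lifting step, note first that every coefficient of $\phi'$ has positive $v_F$-valuation, since the coefficient of $X^{i-1}$ in $\phi'$ is $ia_i$, with $v_F(ia_i) \ge v_F(a_i) \ge 1$ for $0 < i < d$ by $(\star\star)$, and equals $p^k$ for $i = d = p^k$, with $v_F(p^k) = kv_F(p) \ge 1$. Expanding $\phi$ in powers of $(x-y)$ as in the proof of Lemma \ref{lem:contraction} and using $\verts{\phi'(y)}_F \le \verts{\varpi}_F$, one obtains $\verts{\phi(x)-\phi(y)}_F \le \verts{\varpi}_F\verts{x-y}_F$ whenever $x \equiv y \pmod{\mfp_F}$; iterating $m$ times, $\phi^m$ contracts each residue ball $B_{\bar x} \subseteq \calo_F$ (the preimage of $\bar x$ under reduction) by the factor $\verts{\varpi}_F^m < 1$. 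Because $\bar\phi^{\,m} = \mathrm{id}$, the map $\phi^m$ sends each $B_{\bar x}$ into itself; as $B_{\bar x}$ is a nonempty complete metric space, the contraction mapping principle produces a unique $\xi_{\bar x} \in B_{\bar x}$ with $\phi^m(\xi_{\bar x}) = \xi_{\bar x}$, a periodic point of $\phi$ of exact period dividing $m$. Distinct residue classes give distinct $\xi_{\bar x}$, so $\#\Per_F(\phi) \ge p^f$; together with Proposition \ref{prop:uniform-boundedness-p-adic} this forces $\#\Per_F(\phi) = p^f$, and the $\xi_{\bar x}$ are then all of $\Per_F(\phi)$, each of period dividing $m$. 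This proves (a).

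For (b), suppose $km = f$. Then $g(X) := \phi^m(X) - X$ is monic of degree $d^m = p^{km} = p^f \ge 2$, and by part (a) its $p^f$ pairwise distinct roots $\xi_{\bar x}$ exhaust its degree, so $g$ splits into distinct linear factors over $F$. M\"obius inversion of the defining product \eqref{defdynatomic} gives the factorization $g(X) = \prod_{n \mid m}\Phi_n(X)$ in $F[X]$, and separability of $g$ forces each factor $\Phi_n$ ($n \mid m$) to be separable with a root set in $F$ disjoint from that of $\Phi_{n'}$ for $n' \ne n$; hence $\Phi_n$ has exactly $\deg_X\Phi_n(X)$ roots, all in $F$. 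Finally, a root $\alpha$ of $\Phi_n$ has exact period exactly $n$: it is a root of $\phi^n(X) - X = \prod_{j \mid n}\Phi_j(X)$, so its exact period $r$ divides $n$, and it is also a root of $\phi^r(X) - X = \prod_{j \mid r}\Phi_j(X)$, hence of some $\Phi_j$ with $j \mid r \mid m$; disjointness of the root sets forces $j = n$, so $n \mid r$ and $r = n$. Thus for each $n \mid m$ the points of $\Per_F(\phi)$ of exact period $n$ are exactly the $\deg_X\Phi_n(X)$ roots of $\Phi_n$, which proves (b).
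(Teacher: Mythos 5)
Your proof is correct and follows the same overall strategy as the paper's: reduce $\phi$ modulo $\varpi$, use the hypothesis to show that $\bar\phi^{m}$ is the identity on $\F_{p^f}$, lift each residue class to a unique periodic point, and then count exact periods via the dynatomic factorization of $\phi^m(X)-X$. Your computation $\bar\phi^{\,n}(x) = \sigma^n(x) + n\bar a_0$ is exactly Lemma~\ref{lemma2.5}, rephrased. The two small departures are (i) for the lifting step you invoke the contraction mapping principle (after checking that every coefficient of $\phi'$ lies in $\mfp_F$) where the paper applies Hensel's Lemma to $g(X)=\phi^m(X)-X$ with $g'(\alpha)\equiv -1 \pmod{\varpi}$ --- these two arguments are effectively the same; and (ii) in part (b) you derive directly from the separability of $g$ and the disjointness of the root sets of the $\Phi_n$ that each root of $\Phi_n$ has exact period $n$, whereas the paper cites Morton--Patel \cite[Theorem 2.4(c)]{morton-patel} for this. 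Your self-contained version of (ii) is a pleasant simplification but buys nothing essentially new; everything else matches the paper's proof.
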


\begin{nonexample}
    The assumption of Proposition \ref{prop:p-power-p-adic}
    that $f \mid k$ or $a_0 \in \Q$
    is necessary in the sense that
    the conclusion of Proposition~\ref{prop:p-power-p-adic}(a) is not true
    if $a_0 \in \calo_F \setminus \Q$ and $f \nmid k$.
    For example, let $F = \Q_2(\sqrt{-3})$, which has $f = 2$ and has uniformizer $\varpi = 2$. Let $d = 2$, so $k = 1$. Let $\omega = -\frac{1}{2}+\frac{\sqrt{-3}}{2}$. Let $\phi = X^2+\omega$. Let $g(X) = \phi^4(X)-X$. Then, one can check that $g(\omega) = -2-2\sqrt{-3}$ and $g'(\omega) \equiv 1 \pmod{2}$. From Hensel's Lemma (cf. \cite[6.5.2]{gouvea}), there is $\alpha \in F$ with $\alpha \equiv \omega \pmod{2}$ and $g(\alpha) = 0$. We can check manually that $\phi^i(\omega) \not \equiv \omega \pmod{2}$ for $1 \leq i \leq 3$. Therefore, $\alpha \in F$ is a periodic point of exact period $4$, which does not divide $m = 2$.
\end{nonexample}

To prove Proposition \ref{prop:p-power-p-adic},
we first describe the iterates of $\phi$
modulo the uniformizer $\varpi$, with a lemma proved in a similar manner to \cite[Lemma 2.3]{HS24}.

    \begin{lemma}\label{lemma2.5}
        Let $\phi$ be as in $(\star \star)$, and assume that $f \mid k$ or $a_0 \in \Q$. Letting $n \in \N$, then 
        \[
            \phi^{n}(X) \equiv X^{d^{n}} + a_0n \pmod \varpi.
        \]
    \end{lemma}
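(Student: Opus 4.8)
The plan is to prove Lemma~\ref{lemma2.5} by induction on $n$, exploiting the structure of $(\star\star)$ modulo $\varpi$. For the base case $n=1$, observe that $(\star\star)$ says $a_d = 1$ and $v_F(a_i) > 0$ for $i \neq 0, d$, so $\phi(X) \equiv X^d + a_0 \pmod{\varpi}$ directly. For the inductive step, assume $\phi^{n}(X) \equiv X^{d^n} + a_0 n \pmod{\varpi}$. Then
\[
    \phi^{n+1}(X) = \phi(\phi^n(X)) \equiv \phi^n(X)^d + a_0 \equiv (X^{d^n} + a_0 n)^d + a_0 \pmod{\varpi}.
\]
So the crux is to show $(X^{d^n} + a_0 n)^d \equiv X^{d^{n+1}} + a_0 n \pmod{\varpi}$, i.e. that raising to the $d$-th power acts as a ``Frobenius-like'' operation that kills the cross terms and fixes the constant $a_0 n$ modulo $\varpi$.

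The key arithmetic fact is that $d = p^k$, so in the residue field $\calo_F/\mfp_F \cong \F_{p^f}$ the map $z \mapsto z^p$ is the Frobenius endomorphism, and hence $z \mapsto z^{p^k} = z^d$ is its $k$-th iterate $\mathrm{Frob}^k$, which is a ring homomorphism. Applying this homomorphism to $X^{d^n} + a_0 n$ (viewing coefficients in $\F_{p^f}$) gives $(X^{d^n})^d + (a_0 n)^d = X^{d^{n+1}} + (a_0 n)^d$ — the binomial cross terms vanish because Frobenius is additive. It then remains to identify $(a_0 n)^d$ with $a_0 n$ in $\F_{p^f}$. Here is where the hypothesis ``$f \mid k$ or $a_0 \in \Q$'' enters: if $f \mid k$, then $\mathrm{Frob}^k$ acts as the identity on all of $\F_{p^f}$ (since $\mathrm{Frob}^f = \mathrm{id}$), so $(a_0 n)^d = a_0 n$; if $a_0 \in \Q$, then $a_0 n$ reduces to an element of the prime field $\F_p \subseteq \F_{p^f}$, which is fixed by Frobenius and hence by $\mathrm{Frob}^k$, again giving $(a_0 n)^d = a_0 n$. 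Either way the inductive step closes.

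I would write the argument by reducing everything to $R := \calo_F/\mfp_F$: let $\bar{a}_0, \bar{X}$ denote images, note the Frobenius $\sigma = \mathrm{Frob}^k \colon R \to R$ satisfies $\sigma(z) = z^d$ and is a ring homomorphism fixing $\F_p$, and under the stated hypothesis also fixes $\bar a_0$ (hence $\overline{a_0 n}$, as $n \in \Z$ maps into $\F_p$). Then $\phi^{n+1}(X) \bmod \varpi = \sigma\big(\overline{\phi^n(X)}\big) + \bar a_0 = \sigma(\bar X^{d^n}) + \sigma(\overline{a_0 n}) + \bar a_0 = \bar X^{d^{n+1}} + \overline{a_0 n} + \bar a_0 = \bar X^{d^{n+1}} + \overline{a_0(n+1)}$.

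The main obstacle — really the only subtlety — is making precise the claim that ``the $d$-th power map is additive'' at the level of polynomials with coefficients in $R$: this is exactly the statement that $\sigma$, being a ring endomorphism of $R$, extends coefficientwise to a ring endomorphism of $R[X]$ fixing $X$, so that $\sigma\big(\sum c_j X^j\big) = \sum \sigma(c_j) X^{j}$ while simultaneously $\big(\sum c_j X^j\big)^d = \sum c_j^d X^{jd}$ — these agree only after matching exponents, which works because $(X^{d^n})^d = X^{d^{n+1}}$ has the exponent scale exactly as $\sigma$ expects. One must be slightly careful that the congruence $\phi(Y) \equiv Y^d + a_0 \pmod \varpi$ is being applied with $Y = \phi^n(X)$, a polynomial, so the reduction is happening in $\calo_F[X]$, not just for values; but since reduction mod $\varpi$ is a ring homomorphism $\calo_F[X] \to R[X]$ and composition of polynomials commutes with ring homomorphisms, this is routine. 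I expect the whole proof to be under half a page.
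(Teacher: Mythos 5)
Your proposal is correct and follows essentially the same route as the paper: induction on $n$, using that $z \mapsto z^d = z^{p^k}$ is the $k$-th Frobenius power (hence additive in characteristic $p$), and then splitting into the two cases $f \mid k$ (Frobenius$^k$ is the identity on $\F_{p^f}$) versus $a_0 \in \Q$ (so $\overline{a_0 n} \in \F_p$ is fixed by Frobenius). Your extra care in distinguishing the coefficientwise extension of $\sigma$ from $d$-th powering in $R[X]$ is a fair precision that the paper leaves implicit, but the underlying argument is identical.
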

    
    \begin{proof}
        Equality of two elements in $\calo_F \pmod{\varpi}$ means they are mapped to the same element in $\calo_F/\mfp_F$. 
        Reducing modulo $\varpi$, we work in $ \calo_F/\mfp_F \simeq F_{p^f}$.
    Since $d = p^k$, the map $x \mapsto x^d = x^{p^k}$
    is the $k$-th iterate of the Frobenius automorphism on $\F_{p^f}$.
    In particular, we have $x^{p^f} \equiv x \pmod \varpi$
    and
        $(x + y)^d \equiv x^d + y^d \pmod \varpi$ 
    for all $x, y \in \calo_F / \mfp_F$.
        We induct on $n$. Let $i \in \N$ and
        assume that
        \[
            \phi^i(X) \equiv X^{d^i} + a_0i \pmod \varpi \;.
        \]
        We will show the lemma to hold for $i+1$.
        If $f \mid k$, then $(a_0i)^d \equiv a_0i \pmod \varpi$. If $a_0 \in \Q$, then $a_0i$ is in $\Q$ and thus $a_0i \equiv s \pmod{\varpi}$ for some $s \in \{0,1,\ldots,p-1\}$. In that case, the Frobenius automorphism fixes $a_0i \pmod{\varpi}$, so again $(a_0i)^d \equiv a_0i \pmod \varpi$.
        Hence, 
        \begin{alignat*}{3}
            \phi^{i+1}(X)
                &\equiv \paren{X^{d^{i}} + a_0i}^d + a_0  &&\pmod \varpi \\
                &\equiv X^{d^{i+1}} + (a_0i)^d + a_0    &&\pmod \varpi \\
                &\equiv X^{d^{i+1}} + a_0(i + 1)         &&\pmod \varpi,
        \end{alignat*}
        so we are done.
    \end{proof}

\begin{proof}[Proof of Proposition \ref{prop:p-power-p-adic}]
    To prove (a), it suffices to find $p^f$ periodic points of $\phi$ in $F$ of period $m$. Then, Proposition \ref{prop:uniform-boundedness-p-adic} tells us that these are all of them. Points of period $m$ are given by roots of $\phi^m(X) -X$.
    
    By the definition of $m$, we have $\varpi \mid a_0m$, so $a_0m \equiv 0 \pmod{\varpi}$. Then, letting $n = m$ and plugging into Lemma \ref{lemma2.5} yields
    \[
        \phi^m(X) \equiv X^{d^m} + a_0m \equiv X^{p^{km}} \pmod \varpi.
    \]
    
    By the definition of $m$, we also have that $f \mid km$. Hence, $p^{f}-1 \mid p^{km} -1$. 
    Since $|(\calo_F/\mfp_F)^\times| = p^{f}-1$, Lagrange's theorem gives that all $p^f-1$ elements of $(\calo_F/\mfp_F)^\times$ are roots of $X^{p^{km} -1}-1$. So, all $p^f$ elements of $\calo_F/\mfp_F$ are roots of $X^{p^{km}}-X$.
    
    Let $g(X) = \phi^m(X)-X$, and let $\alpha \in \calo_F/\mfp_F$ be any of these $p^f$ roots of $X^{p^{km}}-X$. Then, $\verts{g(\alpha)}_F \equiv 0 \pmod{\varpi}$ and $g'(\alpha) = -1\pmod{\varpi}$. By Hensel's Lemma (cf. \cite[Theorem 6.5.2]{gouvea}), there exist exactly $p^f$ unique roots of $g(X)$ in $\calo_K$. These are the $p^f$ period $m$ points of $\phi$.

   We now prove (b). Morton and Patel \cite[Theorem 2.4(c)]{morton-patel} showed that if $\Phi_{n}(X)$ has no double roots in $X$,
    then the roots of $\Phi_{n}(X)$ are precisely the set of points of exact period $n$ of $\phi$. M\"obius inversion on (\ref{defdynatomic}) yields the following formula: 
    \begin{equation}\label{eqMobius}
    \phi^m(X)-X = \prod_{n \mid m} \Phi_{n}(X)\;.
    \end{equation}    
    Now, let $km = f$. Then, the left side has $p^f$ unique roots in $F$, and has degree $p^f$. So, $\Phi_n(X)$ has $\deg_X( \Phi_n(X))$ many unique roots in $F$ for all $n \mid m$. Therefore, there are exactly $\deg_X (\Phi_n(X))$ many periodic points of $\phi$ of exact period $n$.
\end{proof}

\subsection{Periodic points of quadratic polynomials}
In this section, we specialize to $p=2$
and use the ramification theory at the prime $2$ to
more precisely classify the periodic points of quadratic polynomials $\phi_{2,c}$
in extensions $F$ of $\Q_2$.
This extends the results in
\cite[Theorems 7 and 8]{walde-russo}
from $\Q_2$ to any finite extension of $\Q_2$
and will be used to classify
periodic points of quadratic polynomials
in quadratic number fields.
\begin{proposition}
    \label{prop:quadratic-2-adic}
    Let $F$ be a finite extension of $\Q_2$ with residue degree $f$.
    Let $c = r/s \in \Q$ with $s$ odd, and let $\phi_{2,c}$ be the polynomial $X^2+c$. Then:
    \begin{enumerate}[label = (\alph*)]
        \item If $rf$ is even, then all periodic points of $\phi_{2,c}$ in $F$ have period $f$, and for each $n \mid f$, there are exactly $\deg_X(\Phi_{n}(X))$ many periodic points of $\phi_{2,c}$ in $F$ of exact period $n$.
        \item If $rf$ is odd, then all periodic points of $\phi_{2,c}$  in $F$ have period $2f$.
    \end{enumerate}
\end{proposition}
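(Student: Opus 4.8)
The plan is to deduce Proposition~\ref{prop:quadratic-2-adic} from Proposition~\ref{prop:p-power-p-adic} applied with $p = 2$, $k = 1$, $d = 2$, after verifying the hypotheses and evaluating the invariant $m = m(\varpi, f, a_0, k)$ in the two parity regimes. So the work is almost entirely a translation of $m$ into the condition on the parity of $rf$.

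First I would check that $\phi_{2,c}(X) = X^2 + c$ is a polynomial of the form $(\star \star)$: its leading coefficient is $1$, the coefficient of $X^1$ is $0$ and hence has positive valuation, and the constant term $a_0 = c = r/s$ lies in $\calo_F$ because $s$ odd forces $v_2(c) \geq 0$, so $v_F(c) \geq 0$. Moreover $a_0 = c \in \Q$, so the standing hypothesis ``$f \mid k$ or $a_0 \in \Q$'' of Proposition~\ref{prop:p-power-p-adic} holds for every residue degree $f$. Thus both parts (a) and (b) of Proposition~\ref{prop:p-power-p-adic} are available.

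Next I would compute $m = m(\varpi, f, a_0, k)$, i.e.\ the least positive integer $m$ with $\varpi \mid cm$ in $\calo_F$ and $f \mid km = m$. Since $c \in \Z_2$ and $m \in \Z$, we have $cm \in \Z_2$, and $\varpi \mid cm$ in $\calo_F$ is equivalent to $cm \in \mfp_F \cap \Z_2 = 2\Z_2$ (the maximal ideal $\mfp_F$ lies over $2\Z_2$), i.e.\ to $rm$ being even since $s$ is odd; so $\varpi \mid cm$ holds precisely when $r$ or $m$ is even. A short case split then gives: if $r$ is even the $\varpi$-condition is automatic and the least positive multiple of $f$ is $m = f$; if $r$ is odd and $f$ is even then $m = \lcm(2,f) = f$; and if $r$ and $f$ are both odd then $m = \lcm(2,f) = 2f$. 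In other words $m = f$ when $rf$ is even, and $m = 2f$ when $rf$ is odd.

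Finally I would invoke Proposition~\ref{prop:p-power-p-adic}. For part (a), $rf$ even gives $m = f$: part (a) of that proposition yields exactly $2^f$ periodic points of $\phi_{2,c}$ in $F$, each of exact period dividing $f$ and hence of period $f$; and since $km = 1\cdot f = f$, part (b) gives exactly $\deg_X(\Phi_n(X))$ points of exact period $n$ for every $n \mid f$. For part (b), $rf$ odd gives $m = 2f$, and part (a) of that proposition shows every periodic point of $\phi_{2,c}$ in $F$ has exact period dividing $2f$, hence period $2f$. I do not expect a substantive obstacle here: the argument is essentially bookkeeping, and the only places needing care are the evaluation of $m$ (in particular keeping the generic slots of $m(\cdot,\cdot,\cdot,\cdot)$ distinct from the polynomial's own constant $c$ and degree $d$) and the identification $\mfp_F \cap \Z_2 = 2\Z_2$.
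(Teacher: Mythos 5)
Your proposal is correct and follows essentially the same route as the paper: reduce to Proposition~\ref{prop:p-power-p-adic} with $d = p = 2$, $k = 1$, $a_0 = c$, compute $m(\varpi, f, r/s, 1) = f$ when $rf$ is even and $2f$ when $rf$ is odd, and read off both parts. Your write-up just fills in more detail than the paper (verifying $(\star\star)$ explicitly and spelling out the case analysis via $\mfp_F \cap \Z_2 = 2\Z_2$), but the argument is the same.
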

\begin{proof}[Proof of Proposition \ref{prop:quadratic-2-adic}]
    By assumption, $d = p = 2$ and $k = 1$ and $a_0 = c = r/s$.
 The smallest positive integer $m$ in such that
    $\varpi \mid a_0m$ and $f \mid m$ in $\calo_F$ is
    \[
        m\left(\varpi, f, \frac{r}{s}, 1\right) =
            \begin{cases}
                f & \text{if } 2 \mid rf, \\
                2f & \text{if } 2 \nmid rf.
            \end{cases}
    \]
    Applying Proposition \ref{prop:p-power-p-adic}
    gives all of the desired results.
\end{proof}


\section{From \texorpdfstring{$p$}{p}-adic fields to number fields}
\label{sec:number-fields}

\subsection{General number fields}\label{subsec:numberfields}

In this section, we
use the natural embeddings of number fields in $p$-adic fields
and Propositions \ref{prop:uniform-boundedness-p-adic},
\ref{prop:p-power-p-adic},
and \ref{prop:quadratic-2-adic}
to prove Theorem~\ref{thm:uniform-boundedness},
Theorem \ref{thm:p-power-period}, and Corollary \ref{cor:2-general}
respectively.

Let $\mfp$ be a prime of a number field $K$
above the rational prime $p$.
Let $f = [\calo_K/\mfp : \F_p]$ denote its residue field degree.
Let $K_\mfp$ be the completion of $K$ with respect to $\mfp$. 
The natural embedding $K \hookrightarrow K_\mfp$
makes $K_\mfp/\Q_p$ a finite extension with
residue field degree $f \coloneq  [\calo_{K_\mfp} / \mfm_{K_\mfp} : \F_p]$,
where $\mfm_{K_\mfp}$ is the maximal ideal of
the valuation ring $\calo_{K_\mfp}$ of $K_\mfp$.

\begin{proof}[Proof of Theorem \ref{thm:uniform-boundedness}]
    Let $F = K_\mfp$, the $\mfp$-adic completion of $K$.
    Using the embedding $K \hookrightarrow F$,
    the periodic points of $\phi$ in $K$
    are a subset of periodic points in $F$.
    Applying Proposition \ref{prop:uniform-boundedness-p-adic}
    yields the upper bound
    $\#\Per_K(\phi) \leq \rmn_{K/\Q}(\mfp) = p^f$.
    Furthermore, $p^f \leq d^D$
    since the residue degree $f$ of $\mfp$ over $p$
    is less than $D = [K:\Q]$
    and since $p \mid d$.
\end{proof}

\begin{proof}[Proof of Theorem \ref{thm:p-power-period}]
    Under the embedding $K \hookrightarrow K_\mfp$,
    the periodic points over $K$ map to periodic points over $K_\mfp$.
    Applying Proposition \ref{prop:p-power-p-adic} with $F = K_\mfp$
    yields Theorem \ref{thm:p-power-period}. 
\end{proof}

Specializing Theorems \ref{thm:uniform-boundedness} and \ref{thm:p-power-period}
to $d=2$ and $p=2$ yields bounds of $2^f$ and $m(\mfp, f, a_0, 1) \in \{f, 2f\}$.
In this special case, additional ramification and degree considerations
yield the following slight refinement
of Theorems \ref{thm:uniform-boundedness} and \ref{thm:p-power-period}.

\begin{corollary}
    \label{cor:2-general}
    Let $K$ be a number field with ring of integers $\calo_K$,
    let $\mfp$ be a prime of $\calo_K$ above $2$ with 
    residue degree $f$,
    and let $v_\mfp(c) \geq 0$.
    \begin{enumerate}[label = (\alph*)]
        \item If $2$ splits completely or is totally ramified in $K$,
            then $\Per_K(\phi_{2,c})$ consists of zero or two fixed points
            (resp. periodic points of exact period $2$)
            when $v_\mfp(c) >0$
            (resp. $v_\mfp(c)=0$).
        \item If $c = r/s \in \Q$, then
            $\#\Per_K(\phi_{2,c}) \leq 2^f$.
            Furthermore, each $x \in \Per_K(\phi_{2, c})$
            has period $f$ (resp. $2f$) when
            $rf$ is even (resp. $rf$ is odd).
    \end{enumerate}
\end{corollary}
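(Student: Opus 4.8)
The plan is to deduce both parts from the two main theorems, treating part (b) as an essentially immediate consequence and part (a) with a small supplementary analysis of explicit quadratics. Throughout, write $\phi = \phi_{2,c} = X^2 + c$; it has good reduction at $\mfp$ and satisfies $\mfp \mid a_i$ for $i \notin \{0,2\}$ vacuously (since $a_1 = 0$), so Theorem \ref{thm:uniform-boundedness} and Theorem \ref{thm:p-power-period} both apply with $d = p = 2$, $k = 1$, $a_0 = c$.

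\emph{Part (b).} First I would note that we may take $s$ odd: since $v_\mfp(c) \ge 0$ and $\mfp$ lies over $2$, writing $c$ in lowest terms forces an odd denominator, and then $v_\mfp(c) > 0$ iff $r$ is even. The inequality $\#\Per_K(\phi) \le \rmn_{K/\Q}(\mfp) = 2^f$ is exactly Theorem \ref{thm:uniform-boundedness}. For the periods, apply Theorem \ref{thm:p-power-period} using the hypothesis $a_0 = c \in \Q$: every $x \in \Per_K(\phi)$ has period $m(\mfp, f, c, 1)$. It remains to evaluate this quantity, namely the least positive integer $m$ with $\mfp \mid (c)(m)$ and $f \mid m$. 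When $v_\mfp(c) > 0$ the first condition is automatic, so $m = f$; when $v_\mfp(c) = 0$ the first condition becomes $2 \mid m$, so $m = \lcm(2,f)$, which is $f$ if $f$ is even and $2f$ if $f$ is odd. Combining, $m = f$ exactly when $rf$ is even and $m = 2f$ when $rf$ is odd. (Equivalently one may invoke Proposition \ref{prop:quadratic-2-adic} for $F = K_\mfp$, which has residue degree $f$, via $K \hookrightarrow K_\mfp$.)

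\emph{Part (a).} The hypothesis that $2$ splits completely or is totally ramified in $K$ forces the residue degree of $\mfp$ to be $f = 1$, so $\calo_K/\mfp \simeq \F_2$. Then Theorem \ref{thm:uniform-boundedness} gives $\#\Per_K(\phi) \le 2$, Lemma \ref{lem:expansion} puts every periodic point in $\calo_{K,\mfp}$, and Theorem \ref{thm:p-power-period} gives $m(\mfp,1,c,1) = 1$ when $v_\mfp(c) > 0$ and $= 2$ when $v_\mfp(c) = 0$, so in the two cases every periodic point $x$ satisfies $\phi(x) = x$, respectively $\phi^2(x) = x$. If $v_\mfp(c) > 0$, the periodic points are the $K$-rational roots of $\phi(X) - X = X^2 - X + c$; its discriminant $1 - 4c$ is nonzero (as $c = 1/4$ would contradict $v_\mfp(c) \ge 0$), so this monic quadratic is separable, and if one root lies in $K$ so does the other (their sum is $1$); hence $\#\Per_K(\phi) \in \{0,2\}$, all periodic points fixed. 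If $v_\mfp(c) = 0$, I would use the factorization $\phi^2(X) - X = (X^2 - X + c)(X^2 + X + c + 1)$: a root of the first factor would be a fixed point $\alpha \in \calo_{K,\mfp}$ whose reduction satisfies $\bar\alpha^2 + \bar\alpha + 1 = 0$ in $\F_2$ (using $\bar c = 1$), impossible since $X^2 + X + 1$ is irreducible over $\F_2$. So every periodic point is a root of $\Phi_2(X) = X^2 + X + c + 1$, whose discriminant $-3 - 4c$ is again nonzero (as $c = -3/4$ contradicts $v_\mfp(c) \ge 0$); as before there are $0$ or $2$ such roots in $K$, none fixed, so each has exact period $2$.

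The only step with genuine content is the part-(a) dichotomy ruling out a lone periodic point: it relies on $f = 1$ making the residue field $\F_2$, in which $X^2 + X + 1$ has no root, together with the fact that the fixed points and the period-$2$ points of $\phi_{2,c}$ are cut out by explicit separable quadratics over $K$ whose roots occur in Galois-conjugate pairs. Everything else — verifying the hypotheses of Theorems \ref{thm:uniform-boundedness} and \ref{thm:p-power-period}, and evaluating $m(\mfp, f, c, 1)$ — is routine bookkeeping.
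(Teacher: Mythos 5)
Your proof is correct and follows essentially the same route as the paper: for (b) you reduce to the $\mfp$-adic bound and evaluate $m(\mfp,f,c,1)$, which is just Propositions~\ref{prop:uniform-boundedness-p-adic} and \ref{prop:quadratic-2-adic} applied to $F = K_\mfp$; for (a) you exploit $f=1$ to reduce the factors $X^2-X+c$ and $X^2+X+c+1$ of $\phi^2(X)-X$ modulo $\mfp$ to $\F_2$ and use separability plus Vieta to get the $0$-or-$2$ dichotomy, exactly as in the paper (which cites Walde--Russo for the same factorization). The only slight variation is that you invoke Theorem~\ref{thm:p-power-period} in part (a) to cap the period at $m(\mfp,1,c,1)\in\{1,2\}$, while the paper instead rules out the unwanted exact periods directly from the $\F_2$ reductions of the two quadratic factors; both amount to the same residue-field computation.
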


\begin{proof}
    Observe that the setting when $2$ splits completely
    or is totally ramified in $K$
    corresponds to residue degree
    $f = 1$.
    By Theorem \ref{thm:uniform-boundedness},
    the quadratic polynomial $\phi_{2,c}$ has at most two periodic points. Then, notice that $\calo_{K,\mfp}/\mfp \calo_{K,\mfp} \simeq \calo_K/\mfp \calo_K \simeq \F_2$. 
    By \cite[Theorem 1]{walde-russo},
    the fixed points of the quadratic polynomial $\phi_{2,c}$ are the roots of $g(X) \coloneq  X^2 - X + c$ and the periodic points of exact period $2$ are the roots of $h(X) \coloneq  X^2 + X + 1 + c$ when $c \neq -\frac{3}{4}$.
    Consider reductions of these polynomials to $\F_2$. If $v_\mfp(c) > 0$, then $\overline{g}(X)$ has no repeated roots in $\F_2$ and $\overline{h}(X) = X^2+X+1$ has no roots in $\F_2$. Therefore, $\phi_{2,c}$ has either zero or two fixed points,
    and has no other periodic points.
    If $v_\mfp(c) = 0$, then $\overline{g}(X) = X^2+X+1$ has no roots in $\F_2$, so $\phi_{2,c}$ has zero or two points of exact period $2$, and no other periodic points.
    These two cases yield Corollary \ref{cor:2-general}(a).

    Again consider the embedding $K \hookrightarrow K_\mfp$. 
    Applying Propositions \ref{prop:p-power-p-adic} and \ref{prop:quadratic-2-adic}
    with $F = K_\mfp$ immediately
    gives Corollary \ref{cor:2-general}(b).
\end{proof}

\begin{remark}
    \label{rem:walde-russo}
    The totally split case of
    Corollary \ref{cor:2-general}(a)
    can be proved as a direct consequence of
    theorems by Walde--Russo
    \cite[Theorems 7 and 8]{walde-russo}
    about periodic points of $\phi_{2, c}$ in $\Q_2$,
    using the fact that
    a totally split prime $\mfp$ over $2$
    gives rise to an embedding
    $K \hookrightarrow \Q_2$. 
\end{remark}

\subsection{Quadratic number fields}

Finally, we specialize further to
quadratic polynomials.
The dynamics of quadratic polynomials can be reduced to the
dynamics of $\phi_{2, c}(X) = X^2 + c$ by linear conjugation.
For quadratic number fields $K = \Q(\sqrt{\Delta})$,
it is conjectured (cf. \cite{doyle-faber-krumm,HI13})
that all $K$-rational periodic points of $\phi_{2, c} \in \Q[X]$
have exact period $n<5$,
with a single exception given by the $6$-cycle of $X^2 - \frac{71}{48}$
in $\Q(\sqrt{33})$;
the second author \cite[Corollaries 1.8 and 1.9]{Zhang21}
proved this when $c$ is not
in a finite set $\Sigma_{2,n} \subset \Q$
for $n = 5$ and conditionally for $n=6$. 
We apply Theorem \ref{thm:p-power-period}
to give a classification of periodic points of
$\phi_{2,c}$ over quadratic number fields
when $c \in K$ is integral at $\mfp$.
\begin{corollary}
    \label{cor:2-quadratic-field}
    Let $K = \Q(\sqrt{\Delta})$ be a quadratic number field,
    $\mfp$ be a prime of $\calo_K$ above 2, and $c \in \calo_{K,\mfp}$.
    \begin{itemize}
        \item If $\Delta \not\equiv 5 \pmod 8$,
            then $\#\Per_K(\phi_{2, c}) = 0$ or $2$.\\
            Furthermore, each $x \in \Per_K(\phi_{2, c})$
            is a fixed point (resp. point of exact period $2$)
            when $v_\mfp(c) > 0$
            (resp. $v_\mfp(c) = 0$).
        \item If $\Delta \equiv 5 \pmod 8$,
            then $\#\Per_K(\phi_{2, c}) = 0$, $2$, or $4$.\\
            Furthermore, each $x \in \Per_K(\phi_{2, c})$
            has exact period $\leq 2$ when $c \in \Q$.
    \end{itemize}
\end{corollary}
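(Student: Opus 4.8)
The plan is to split on the factorization of $2$ in $\calo_K$. With $\Delta$ squarefree, the hypothesis $\Delta \equiv 5 \pmod 8$ is precisely the condition that $2$ is inert in $\calo_K$, so that the prime $\mfp$ above $2$ has residue degree $f = 2$ and $\rmn_{K/\Q}(\mfp) = 4$; otherwise $2$ splits or ramifies, $f = 1$, and $\rmn_{K/\Q}(\mfp) = 2$. In every case $\phi := \phi_{2,c} = X^2 + c$ is monic with $a_0 = c \in \calo_{K,\mfp}$, so $\phi$ has good reduction at $\mfp$; its unique coefficient of index coprime to $2$ — equivalently, of index outside $\{0,d\}$ — is $a_1 = 0 \in \mfp$, so the hypotheses of Theorems \ref{thm:uniform-boundedness} and \ref{thm:p-power-period} are met with $d = 2$, $p = 2$, $k = 1$. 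I will use throughout two elementary facts: (i) the fixed points of $\phi$ in $K$ are the $K$-roots of $X^2 - X + c$ and the exact-period-$2$ points are the $K$-roots of $X^2 + X + c + 1$, and since $c \notin \{\tfrac14, -\tfrac34\}$ (because $v_\mfp(c) \geq 0$) both quadratics are separable, so each of these sets has size $0$ or $2$; and (ii) the $K$-points of a fixed exact period form full $\phi$-orbits, so their number is a multiple of that period.

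For the first bullet ($f = 1$), Theorem \ref{thm:p-power-period} applies because $f = 1 \mid k = 1$, and shows every $x \in \Per_K(\phi)$ has period $m(\mfp, 1, c, 1)$. Evaluating the definition of $m(\,\cdot\,)$, this equals $1$ if $v_\mfp(c) > 0$ and $2$ if $v_\mfp(c) = 0$. If $v_\mfp(c) > 0$, every periodic point is literally a fixed point, so $\#\Per_K(\phi) \in \{0,2\}$ by (i). If $v_\mfp(c) = 0$, then $\phi$ has no fixed point in $K$: reducing $x^2 - x + c = 0$ modulo $\mfp$ gives $\bar x^2 + \bar x + 1 = 0$ in $\calo_K/\mfp = \F_2$, which is insoluble. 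Hence every periodic point has exact period exactly $2$, and $\#\Per_K(\phi) \in \{0,2\}$ by (i). This yields the first bullet.

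For the second bullet ($f = 2$, $\calo_K/\mfp \cong \F_4$), Theorem \ref{thm:uniform-boundedness} gives $\#\Per_K(\phi) \leq \rmn_{K/\Q}(\mfp) = 4$. The key extra observation is that squaring on $\F_4$ is the Frobenius of $\F_4/\F_2$, so $\overline\phi(x) = x^2 + \bar c$ satisfies $\overline\phi^{\,2}(x) = x^4 + \bar c^2 + \bar c = x + \bar c^2 + \bar c$ for $x \in \F_4$, whence $\overline\phi^{\,4} = \mathrm{id}_{\F_4}$ and $\overline\phi$ has no orbit of length $3$. By the proof of Theorem \ref{thm:uniform-boundedness} the reduction map is injective on $\Per_K(\phi)$, and being $\phi$-equivariant it carries each $K$-rational $\phi$-orbit bijectively onto an $\overline\phi$-orbit of the same length, so the exact period of every $x \in \Per_K(\phi)$ equals that of $\bar x$ and hence lies in $\{1,2,4\}$. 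Combining (i), (ii), and $\#\Per_K(\phi)\le 4$, the count is $(0 \text{ or } 2) + (\text{a multiple of } 2) + (\text{a multiple of } 4) \le 4$, and an immediate case check leaves only the values $0, 2, 4$. Finally, if $c \in \Q$ then Theorem \ref{thm:p-power-period} applies via its alternative hypothesis $a_0 \in \Q$, giving period $m(\mfp, 2, c, 1)$, which the definition of $m(\,\cdot\,)$ evaluates to $2$ regardless of $v_\mfp(c)$; hence each periodic point has exact period at most $2$.

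The main obstacle is the $\Delta \equiv 5 \pmod 8$ case: the bound $\#\Per_K(\phi)\le 4$ together with (ii) does not by itself exclude a single stray $3$-cycle (with no fixed points), and ruling it out needs both the injectivity of reduction on periodic points, imported from the proof of Theorem \ref{thm:uniform-boundedness}, and the clean identity $\overline\phi^{\,4} = \mathrm{id}$ on $\F_4$. Everything else — the factorizations in (i), the orbit-counting in (ii), and the two evaluations of $m(\mfp, f, c, k)$ — is routine.
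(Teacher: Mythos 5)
Your proof is correct, and for the crucial step in the $\Delta \equiv 5 \pmod 8$ case you take a genuinely different route from the paper. The paper rules out $\#\Per_K(\phi_{2,c}) = 3$ by showing directly that no $K$-rational $3$-cycle can exist: it invokes Walde--Russo's explicit parametrization of all $c$'s admitting a rational $3$-cycle, $c = -\frac{\tau^6+2\tau^5+4\tau^4+8\tau^3+9\tau^2+4\tau+1}{4\tau^2(\tau+1)^2}$, and checks case-by-case that $\lvert c \rvert_\mfp > 1$ for every $\tau \in K$, contradicting $c \in \calo_{K,\mfp}$. You instead observe that reduction modulo $\mfp$ is injective on $\Per_K(\phi)$ (extracting this from Lemma \ref{lem:contraction} via Proposition \ref{prop:uniform-boundedness-p-adic}), is $\phi$-equivariant, and therefore preserves exact periods; since $\overline{\phi}^{\,2}$ is a translation on $\F_4$ and $\overline{\phi}^{\,4} = \mathrm{id}$, all exact periods lie in $\{1,2,4\}$, and a $3$-cycle is impossible. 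Your argument is more structural and self-contained --- it needs no external parametrization and no polynomial identity check over $\F_4$ --- and it transparently explains \emph{why} the exclusion works: the Frobenius structure of $\F_{p^f}$ forbids odd periods other than $1$. The paper's argument, while more computational, does not rely on correctly identifying the image orbit structure, and the authors already had the Walde--Russo machinery in play elsewhere. (Your handling of the first bullet also differs slightly --- you derive it from Theorem \ref{thm:p-power-period} and the period formula $m(\mfp,1,c,1)$ rather than citing Corollary \ref{cor:2-general}(a), but that is the same underlying computation.) Both proofs are sound; yours is the cleaner one for the key exclusion.
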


\begin{proof}[Proof of Corollary \ref{cor:2-quadratic-field}]
Fix the non-archimedean absolute value
$\lvert \, \cdot \, \rvert_{\mfp}$ on $K$
normalized by
\[
    \lvert x \rvert_\mfp
        = \rmn(\mfp)^{-v_{\mfp}(x)}
        = (2^f)^{-v_{\mfp}(x)},
\]
so that $\lvert x \rvert_\mfp > 1$
if and only if $v_\mfp(x) < 0$.

    The splitting of $2$ in a
    quadratic number field $K = \Q(\sqrt{\Delta})$
    is precisely given by:
    \begin{itemize}
        \item if $\Delta \equiv 1 \pmod 8$,
            then $2$ splits completely and $f = 1$;
        \item if $\Delta \equiv 2, 3 \pmod 4$,
            then $2$ is totally ramified and $f = 1$;
        \item if $\Delta \equiv 5 \pmod 8$,
            then $2$ is inert and $f = 2$.
    \end{itemize}
    It follows directly from Corollary \ref{cor:2-general}(a)
    that if $\Delta \not\equiv 5 \pmod 8$, then
    $\#\Per_K(\phi_{2, c}) \in \{0, 2\}$
    and each $x \in \Per_K(\phi_{2, c})$
    is a fixed point (resp. point of exact period $2$)
    when $v_\mfp(c) > 0$
    (resp. $v_\mfp(c) = 0$).
    
    Suppose that $\Delta \equiv 5 \pmod{8}$.
    By Theorem \ref{thm:uniform-boundedness},
    $\#\Per_{K}(\phi_{2,c}) \leq 2^2 = 4$.
    By the same argument as in
    the proof of Corollary \ref{cor:2-general},
    $\phi_{2,c}$ cannot have exactly $1$ fixed point.
    Therefore $\#\Per_{K}(\phi_{2,c}) \in \{0,2,3,4\}$
    when $\Delta \equiv 5 \pmod{8}$.

    We prove by contradiction that
    $\#\Per_{K}(\phi_{2,c}) \neq 3$
    in the $\Delta \equiv 5 \pmod 8$ case.
    Suppose that $\Delta \equiv 5 \pmod{8}$ and
    $\#\Per_K(\phi_{2, c}) = 3$.
    Since $\phi_{2, c}$ cannot have exactly $1$ or $3$ fixed points,
    there must exist a periodic point of $\phi_{2, c}$
    of exact period $3$.
    By \cite[Theorem 3]{walde-russo},
    the quadratic polynomial $\phi_{2,c}$
    has a $K$-rational $3$-cycle if and only if
    \[
         c = -\frac{\tau^6+2\tau^5+4\tau^4+8\tau^3+9\tau^2+4\tau+1}{4\tau^2(\tau+1)^2}
    \]
    for some $\tau \in K$.
    If $\lvert \tau \rvert_\mfp > 1$ then
    $\lvert c \rvert_\mfp = \verts{\frac{\tau^6}{4\tau^4}}_\mfp > 1$.
    If $\lvert \tau \rvert_\mfp \leq 1$,
    then $\tau \in \calo_{K,2}$.
    Let $\overline{\tau}$ be the reduction of $\tau$ in $\calo_{K,2}/2\calo_{K,2} \cong \calo_{K}/2\calo_{K} \cong \F_4$; 
    observe that the numerator $\overline{\tau}^6+2\overline{\tau}^5+4\overline{\tau}^4+8\overline{\tau}^3+9\overline{\tau}^2+4\overline{\tau}+1$
    is always nonzero in $\F_4$.
    Thus, the numerator has absolute value $1$ and
    $\lvert c \rvert_\mfp = \frac{1}{\verts{4\tau^2(\tau+1)^2}_\mfp} > 1$.
    But this contradicts the hypothesis that
    $c \in \calo_{K, \mfp}$.
    Therefore, no $K$-rational $3$-cycle of $\phi_{2, c}$
    can exist, and
    $\#\Per_{K}(\phi_{2,c}) \in \{0,2,4\}$
    when $\Delta \equiv 5 \pmod{8}$.

    Finally, the assertion that each $x \in \Per_K(\phi_{2,c})$ has exact period $\leq 2$ when $c \in \Q$ follows directly from Corollary~\ref{cor:2-general}(b).
\end{proof}
\begin{remark}
    John Doyle has kindly noted to us that
    the conclusions of Corollary~\ref{cor:2-general}(a)
    and part of Corollary~\ref{cor:2-quadratic-field}
    can be derived from \cite[Theorem~2.21]{Silverman07}.
    We include our arguments here because, within our approach,
    these results emerge as direct and
    particularly simple consequences of the main theorems.
\end{remark}

We highlight how the number of periodic points of $\phi_{2,c}$ 
in Corollary \ref{cor:2-quadratic-field}
can vary through
the following example, in which there are
$4$ periodic points over the quadratic extension $F/\Q_2$
but $2$ or $4$ periodic points over the quadratic extension $K/\Q$.

\begin{example}
    Let $K = \Q(\sqrt{5})$, $K' = \Q(\sqrt{-3})$, and $F = K_2 = K'_2 = \Q_2(\sqrt{5})$.
    Over $F$, the quadratic polynomial $\phi \coloneq X^2 - 1$ has
    two fixed points $\{\frac{1}{2} \pm \frac{\sqrt{5}}{2}\}$
    and two periodic points $\set{-1, 0}$ of exact period $2$, as shown in Figure \ref{figurediagram}.
    These are also the four periodic points of $\phi$ over $K$,
    but $\set{-1, 0}$ are the only two periodic points of $\phi'$ over $K'$.
    Note that Corollary \ref{cor:2-quadratic-field}
    does not distinguish between $K$ and $K'$
    since $2$ is inert in both fields.
\end{example}

An application of Corollary \ref{cor:2-quadratic-field}
verifies the conjecture of Doyle--Faber--Krumm
\cite[Speculation 1.4]{doyle-faber-krumm}
and Doyle \cite[Conjecture 1.4]{doyle-2018}
for the field $\Q(i)$ 
and the field $\Q(\zeta_3) = \Q(\sqrt{-3})$ 
when $c$ is integral
at the unique prime above $2$.
The bound ${\#\Per_K(\phi_{2, c}) \leq 4}$ from
Corollary~\ref{cor:2-quadratic-field}
and a result of Doyle \cite[Theorem 1.4]{doyle-2020}
together imply
that $\# \Preper_K(\phi_{2,c}) \leq 10$.
Furthermore,
Corollary~\ref{cor:2-quadratic-field} 
strengthens the conclusions of
\cite[Theorem 1.4]{doyle-2020}
on the possible portraits $\calp_{K, \phi_{2, c}}$
of preperiodic points of $\phi_{2, c}$ to:
\begin{itemize}
    \item if $c \in \Z[i]_{(1+i)}$, then
        \[
            \calp_{\Q(i), \phi_{2, c}} \in \set{0, 3(2), 4(1,1), 4(2), 5(1,1)a/b, 5(2)a, 6(1,1), 6(2)};
        \]
   \item if $c \in \Z[\zeta_3]_{(2)}$, then
        \[
            \calp_{\Q(\zeta_3), \phi_{2, c}} \in \set{0, 3(2), 4(1,1), 4(2), 5(1,1)a, 6(1,1), 6(2), 7(2,1,1)a, 8(2)a, 8(2,1,1)}.
        \]
\end{itemize}
Here, the portraits are labeled
by the number of preperiodic points and
the tuple of sizes of cycles
as in \cite[Appendix B]{doyle-2020}.


\bibliography{bibliography}{}
\bibliographystyle{plain}

\end{document}